\documentclass[preprint,review,12pt]{amsart}

\usepackage{amsfonts,amssymb,amsmath,amsthm}
\usepackage[cp1251]{inputenc}
\usepackage[T2B]{fontenc}
\usepackage[english]{babel}
\usepackage[mathscr]{eucal}
\usepackage{calrsfs}

\newtheorem{theorem}{Theorem}[section]

\newtheorem{proposition}[theorem]{Proposition}
\newtheorem{corollary}[theorem]{Corollary}

\theoremstyle{definition}
\newtheorem{definition}[theorem]{Definition}
\newtheorem{example}[theorem]{Example}

\theoremstyle{remark}
\newtheorem{remark}[theorem]{Remark}

\numberwithin{equation}{section}
\usepackage[left=2.0cm,right=2.2cm,
    top=2.5cm,bottom=2.5cm]{geometry}

\DeclareMathOperator*{\esup}{ess\,sup}

\begin{document}

\title[Hardy--Steklov operators and embedding inequalities]{Hardy--Steklov operators and embedding inequalities of Sobolev type
}

\author[M.G. Nasyrova and E.P. Ushakova]{Maria G. Nasyrova         and
        Elena P. Ushakova }

\address{Computing Center of the Far Eastern Branch of the Russian Academy of Sciences, Kim Yu Chena 65, 680000 Khabarovsk, Russia            
              }
     
\email{elenau@inbox.ru, nassm@mail.ru}

\maketitle

\begin{abstract}
We characterize a weighted norm inequality which corresponds to the embedding of a class of absolutely continuous functions into the fractional
order Sobolev space. The auxiliary result of the paper is of independent interest. It comprises of several types of necessary and sufficient conditions
for the boundedness of the Hardy--Steklov operator (an integral operator with two variable boundaries of integration) in weighted Lebesgue spaces.

{\it Keywords}:
\keywords{Weighted Sobolev space \and Weighted Lebesgue space \and Embedding \and Integral operator \and Hardy--Steklov operator \and Boundedness}

{\it 2010 MSC}: 46E35, 47B38, 35A23
\end{abstract}

\section{Introduction}
\label{intro}
Given $s>0$ and a weight function $v\ge 0$ on $(0,\infty)$ let $L^s_v(0,\infty)=:L_v^s$ denote the weighted Lebesgue space of all measurable
functions $g$ on $(0,\infty)$ satisfying $$\|g\|_{L_v^s}:=\|g\|_{s,v}:=\biggl(\int_0^\infty|g(t)|^sv(t)\,\mathrm{d}t\biggr)^{\frac{1}{s}}<\infty.$$
Suppose that $p\ge 1$, $q>0$, $\lambda\in(0,1)$ and a weight function $u$ is non--negative on $(0,\infty)\times(0,\infty)$. Consider the fractional inequality
\begin{equation}\label{00}
\biggl(\int_0^\infty\!\!\!\!\int_0^\infty\frac{|f(x)-f(y)|^q}{|x-y|^{1+\lambda q}}\,u(x,y)\,\mathrm{d}x\,\mathrm{d}y\biggr)^{\frac{1}{q}}\le C\biggl(
\int_0^\infty |f'(z)|^pv(z)\,\mathrm{d}z\biggr)^{\frac{1}{p}}
\end{equation} 
reflecting the embedding of a subclass $$\mathcal{W}_{p,v}^1:=\{f\in AC(0,\infty): \|f'\|_{p,v}<\infty\}$$ of absolutely
continuous functions (AC--functions) on $(0,\infty)$ into the weighted fractional order Sobolev
space $$W_{q,u}^\lambda:=W_{q,u}^\lambda(0,\infty) := \Bigl\{f :
[f]_{W_{q,u}^\lambda}<\infty\Bigr\}.$$ Given $f\in W_{q,u}^\lambda$, the functional
$$[f]_{W_{q,u}^\lambda}:=\biggl(\int_0^\infty \!\!\!\!\int_0^\infty\frac{|f(x)-f(y)|^q}{|x-y|^{1+\lambda
q}}\,u(x,y)\,\mathrm{d}x\,\mathrm{d}y\biggr)^{\frac{1}{q}}$$ is the weighted Slobodeckij type
seminorm. For $q\ge 1$ the space $W_{q}^\lambda:=W_{q,1}^\lambda$, equipped with the norm
$\|f\|_{W_{q,1}^\lambda}=\|f\|_{q,1}+[f]_{W_{q,1}^\lambda}<\infty$, can also be called Aronszajn
space \cite{Ar}, Gagliardo space \cite{Gar} or Slobodeckij space \cite{Sl}. Being a special case of
Besov spaces [\ref{BIN}, \ref{Tri}] $W_q^\lambda$ plays an important role in the study of traces of Sobolev
functions and are applicable  to partial differential equations (see \cite{LM} for details).

The inequality (\ref{00}) was studied by H. P. Heinig and G. Sinnamon in \cite{HS}, where some conditions for the validity of (\ref{00}) were found. The method, employed in \cite{HS} for the investigation of (\ref{00}), was based on characterization of the Hardy--Steklov
operator
\begin{equation}\label{1}\mathcal{H}g(x)=\int_{a(x)}^{b(x)}g(y)\,\mathrm{d}y\end{equation}
with the boundaries $a(x)$ and $b(x)$ satisfying the conditions:
\begin {equation}
\label {3}~\hspace{-.5cm}
\begin {tabular}{ll} (i) &
$a(x),$ $b(x)$ are differentiable and strictly increasing on
$(0,\infty);$\\ (ii) & $a(0)=b(0)=0,$ $a(x)< b(x)$ for
$0<x<\infty,$ $a(\infty)=b(\infty)=\infty.$
\end {tabular}
\end {equation} The Hardy--Steklov operator (\ref{1}) was studied in [\ref{BS}, \ref{CS}, \ref{HS}, \ref{GL}, \ref{SU}, \ref{SU2}]. It has connections with
other integral transformations [\ref{Harper}, \ref{HS}, \ref{KP}, \ref{SUmia}, \ref{U2}], some embedding theorems
\cite{O2011} and is applicable to other neighbour areas \cite[Ch. 3]{KP}.

The purpose of this paper is twofold. Firstly, we obtain new necessary and sufficient conditions
for the boundedness of $\mathcal{H}$ from $L_v^p$ to $L_w^q$, when $p>1$ and $q>0$  (see Theorem 2.2 and Corollary 2.3). These
conditions complement the already existed ones (see e.g. \cite[Th. 4.1]{SUmia}) with new facts and give
alternative characteristics for the Hardy--Steklov operator $\mathcal{H}$ in weighted Lebesgue spaces. Secondly, using
the idea by H. P. Heinig and G. Sinnamon from \cite[\S~3]{HS} and our new characterizations of 
$\mathcal{H}$, in particular Corollary 2.3, we find conditions for the inequality
(\ref{00}) to hold for all $p\ge 1$ and $q>0$ (see Theorem 3.1). The conditions found in Theorem 3.1 complement the earlier results by H. P. Heinig and G. Sinnamon \cite[Th. 3.3 and Cor. 3.5]{HS} with the case $0<q<p<\infty$, which was not considered in \cite{HS}. For $1\le p\le q<\infty$ we derive characteristics alternative to those given in \cite{HS}.

Throughout the paper products of the form $0\cdot\infty$ are
taken to be equal to 0. Relations of the type $A\ll B$ mean that $A\le cB$ with
some constant $c$ depending, possibly, on parameters $p$ and $q$ only. We write
$A\approx B$ instead of $A\ll B \ll A$ or $A=cB$. We use $\mathbb Z$ and $\mathbb N$ for integers and natural numbers, respectively.
$\chi_E$ stands for the
characteristic function (indicator) of a subset $E\subset (0,\infty)$. We make use of marks $:=$ and
$=:$ for introducing new quantities. 
We assume weight functions to be non--negative and locally integrable to  appropriate powers. In order to shorten big formulae, we use $\int h$ to cut $\int h(t)\,\mathrm{d}t$ for a one--variable integrand $h(t)$, where it makes sence.

\section{Hardy--Steklov operator}
\label{sec:1}
Let $p>1$, $q>0$ and $v,w$ be weight functions on $(0,\infty)$. Suppose that the boundaries $a(x)$ and $b(x)$ of the operator \eqref{1} satisfy
the conditions \eqref{3} and $a^{-1}(y)$, $b^{-1}(y)$ are the inverse functions to $y=a(x)$ and $y=b(x)$.

The most recent development, related to the characterization of the operators \eqref{1} with boundaries $a(x)$ and $b(x)$ satisfying \eqref{3},
is based on the conception of fairway. The fairway $\sigma(x)$ is a function on $(0,\infty)$ with some special properties. It was introduced in
\cite{SU} in order to derive new forms of boundedness criteria for $\mathcal{H}$ acting in weighted Lebesgue spaces, say, from $L_v^p$ to
$L_w^q$. {\it The fairway--function} $\sigma(x)$ in \cite{SU} (see also \cite[Def. 2.4]{SUmia}) is built on given boundaries $a(x)$ and
$b(x)$, weight function $v\ge 0$ and parameter $p>1$ so that $a(x)<\sigma(x)<b(x)$ and
\begin{equation}\label{10v}\int_{a(x)}^{\sigma(x)}v^{1-p'}(y)\,\mathrm{d}y=\int_{\sigma
(x)}^{b(x)} v^{1-p'}(y)\,\mathrm{d}y\qquad{\rm for}\;\;{\rm
all}\;\;x>0.\end{equation}

The conception of fairway $\sigma$ brought a collection of results for $\mathcal{H}:L_v^p\to L_w^q$, which appeared to be convenient to
further development and applications (see e.g. [\ref{L1}, \ref{L2}, \ref{SU4}, \ref{SU2}, \ref{SUmia}, \ref{U2}]. In particular, two different types of the boundedness criteria
for \eqref{1} have been obtained in terms of the fairway $\sigma$ (see e.g. \cite[Ths. 4.1, 4.2]{SUmia} or \cite[Th. 2.1]{SU3}) and
applied to neighbour problems: e.g. characterization of the geometric mean operator with variable boundaries in Lebesgue spaces, weighted embedding inequalities of
Sobolev type (see Th. 5.1 and \S~6 in \cite{SUmia} for details).

The dual form of the boundedness criteria for $\mathcal{H}:L_v^p\to L_w^q$ was stated in \cite[Th. 2.2]{SU3} basing on the definition of
the dual fairway--function.
\begin{definition}\label{def1} Given boundary functions $a(x)$ and
$b(x),$ satisfying the conditions \eqref{3}, and a weight function $w(x)$ such that
$0<w(x)<\infty$ a.e. $x\in(0,\infty)$ and $w(x)$ is locally integrable on
$(0,\infty),$ we define the {\it dual fairway--function} $\rho
(y)$ such that $b^{-1}(y)< \rho(y)< a^{-1}(y)$ on $(0,\infty)$ and \begin{equation}
\label{10} \int_{b^{-1}(y)}^{\rho(y)}w(x)\,\mathrm{d}x=\int_{\rho
(y)}^{a^{-1}(y)} w(x)\,\mathrm{d}x\qquad{\rm for}\;\;{\rm
all}\;\;y>0.
\end{equation}\end{definition}

Similarly to $\sigma(x)$, the dual fairway $\rho(y)$ is differentiable and strictly increasing
function on $(0,\infty)$. The result of Theorem 2.2 from \cite{SU3} follows, by duality, from
\cite[Th. 2.1]{SU3} but is valid for $p>1$ and $q>1$ only.

In this section we prove that the dual form of the boundedness criteria for \eqref{1} from $L_v^p$ to $L_w^q$ 
is also true when $p>1$, $0<q<1$ (see Theorem 2.2 (c), (e)) and can be obtained in terms of the original fairway--function $\sigma$ for all
$p>1$, $q>0$ as well. As a counterpart of this statement, we show that the boundedness criteria in their original (non--dual) form can be
obtained with help of the dual fairway--function $\rho$ instead of $\sigma$. In comparison with all the earlier results for Hardy--Steklov
operator \eqref{1} in weighted Lebesgue spaces (see e.g. \cite{SUmia}) the sufficient parts of 
our new boundedness criteria for $\mathcal{H}:L_v^p\to L_w^q$ do not require the condition \eqref{10v} on $\sigma$ or the condition
\eqref{10}  on $\rho(y)$ (see Theorem 2.2 (a), (b)) if $p,q>1$.

Having several forms of characteristics for an operator in function spaces may promote solutions to some neighbour problems.
This argument concerns directly the embedding inequality \eqref{00}, characterization of which is the main problem of our paper.
The inequality \eqref{00} is connected to the Hardy--Steklov operator (see Section 2 for details) and, depending on the weights $v$ and $w$,
extracting $\rho$ from the equation \eqref{10} may be more practical 
than finding $\sigma$ from \eqref{10v}, and vice versa.

 \smallskip

Given boundaries $a(x)$ and $b(x)$ satisfying the conditions \eqref{3} let $y=\varsigma(x)$ be a strictly increasing continuous function on
$(0,\infty)$ such that $a(x)< \varsigma(x)< b(x)$ for $x>0$. Then the inverse $x=\varsigma^{-1}(y)=:\varsigma^\ast(y)$ is also a strictly increasing
continuous function on $(0,\infty)$ such that $b^{-1}(y)< \varsigma^{\ast}(y)< a^{-1}(y)$ if $y>0$. For such a function $\varsigma$ we
put 
\begin{align*} \Delta(t):=&\Delta^-(t)\cup\Delta^+(t), & \Delta^-(t):=&[a(t),\varsigma(t)), &\Delta^+(t):=&[\varsigma(t),b(t)),\\
\delta(t):=&\delta^-(t)\cup\delta^+(t),
&\delta^-(t):=&[b^{-1}(\varsigma(t)),t),
&\delta^+(t):=&[t,a^{-1}(\varsigma(t))),\\
\Theta(t):=&\Theta^-(t)\cup\Theta^+(t), 
&\Theta^-(t):=&[b^{-1}(t),\varsigma^{\ast}(t)), 
&\Theta^+(t):=&[\varsigma^{\ast}(t),a^{-1}(t)),\\
\vartheta(t):=&\vartheta^-(t)\cup\vartheta^+(t), 
&\vartheta^-(t):=&[a(\varsigma^{\ast}(t)),t), 
&\vartheta^+(t):=&[t,b(\varsigma^{\ast}(t))).\end{align*} Let $W_I(t):=\int_{I(t)}
w 
$ and $V_I(t):=\int_{I(t)}v^{1-p'}$ for some $I(t)\subset(0,\infty)$.
For $p>1$ put $ p':=p/(p-1)$, $q':=q/(q-1)$, $r:=pq/(p-q)$ and denote $\varrho(t):=v^{1-p'}(t),$\\
\begin{align*}\mathcal{A}_\varsigma:=& \sup_{t>0}\bigl[W_\delta(t)\bigr]^{\frac{1}{q}}
\bigl[V_\Delta(t)\bigr]^{\frac{1}{p'}}, &
\mathcal{B}_\varsigma:=&\biggl(\int_0^\infty
\bigl[W_{\delta}\bigr]^{\frac{r}{p}}
\bigl[V_\Delta\bigr]^{\frac{r}{p'}}w\biggr)^{\frac{1}{r}},\\
\mathcal{A}_\varsigma^\pm:=& \sup_{t>0}\bigl[W_{\delta^\pm}(t)\bigr]^{\frac{1}{q}}
\bigl[V_\Delta(t)\bigr]^{\frac{1}{p'}}, &
\mathcal{B}_\varsigma^\pm:=&\biggl(\int_0^\infty
\bigl[W_{\delta^\pm}\bigr]^{\frac{r}{p}}
\bigl[V_\Delta\bigr]^{\frac{r}{p'}}w\biggr)^{\frac{1}{r}},\\
(\mathcal{A}_{\varsigma^\ast})^\ast:=& \sup_{t>0}\bigl[W_\Theta(t)\bigr]^{\frac{1}{q}}
\bigl[V_\vartheta(t)\bigr]^{\frac{1}{p'}}, &
(\mathcal{B}_{\varsigma^{\ast}})^\ast:=&
\biggl(\int_0^\infty\bigl[W_{\Theta}\bigr]^{\frac{r}{q}}
\bigl[V_\vartheta\bigr]^{\frac{r}{q'}}
\varrho\biggr)^{\frac{1}{r}},\\
(\mathcal{A}_{\varsigma^\ast}^\pm)^\ast:=& \sup_{t>0}\bigl[W_\Theta(t)\bigr]^{\frac{1}{q}}
\bigl[V_{\vartheta^\pm}(t)\bigr]^{\frac{1}{p'}}, &
(\mathcal{B}_{\varsigma^{\ast}}^\pm)^\ast:=&\biggl(\int_0^\infty\bigl[W_{\Theta}\bigr]^{\frac{r}{q}}
\bigl[V_{\vartheta^\pm}\bigr]^{\frac{r}{q'}}
\varrho\biggr)^{\frac{1}{r}}.
\end{align*} Here $(\cdot)^\ast$ indicates the dual character of the functionals $(\mathcal{A}_{\varsigma^\ast})^\ast$, $(\mathcal{B}_{\varsigma^{\ast}})^\ast$, $(\mathcal{A}_{\varsigma^\ast}^\pm)^\ast$ and $(\mathcal{B}_{\varsigma^{\ast}}^\pm)^\ast$. Note that if $\varsigma=\sigma$ then
\begin{align*}\mathcal{A}_\sigma=& \, \sup_{t>0}\biggl(\int_{b^{-1}(\sigma(t))}^{a^{-1}(\sigma(t))}w(x)\,\mathrm{d}x\biggr)^{\frac{1}{q}}
\biggl(\int_{a(t)}^{b(t)}\varrho(y)\,\mathrm{d}y\biggr)^{\frac{1}{p'}},\\
\mathcal{B}_\sigma=&\, \biggl(\int_0^\infty
\biggl[\int_{b^{-1}(\sigma(t))}^{a^{-1}(\sigma(t))}w(x)\,\mathrm{d}x\biggr]^{\frac{r}{p}}
\biggl[\int_{a(t)}^{b(t)}\varrho(y)\,\mathrm{d}y\biggr]^{\frac{r}{p'}}w(t)\,\mathrm{d}t\biggr)^{\frac{1}{r}}, 
\\
(\mathcal{A}_{\sigma^{-1}})^\ast=&\, \sup_{t>0}\biggl(\int_{b^{-1}(t)}^{a^{-1}(t)}w(x)\,\mathrm{d}x\biggr)^{\frac{1}{q}}
\biggl(\int_{a(\sigma^{-1}(t))}^{b(\sigma^{-1}(t))}\varrho(y)\,\mathrm{d}y\biggr)^{\frac{1}{p'}},\\
(\mathcal{B}_{\sigma^{-1}})^\ast=&\,
\biggl(\int_0^\infty\biggl[\int_{b^{-1}(t)}^{a^{-1}(t)}w(x)\,\mathrm{d}x\biggr]^{\frac{r}{q}}
\biggl[\int_{a(\sigma^{-1}(t))}^{b(\sigma^{-1}(t))}\varrho(y)\,\mathrm{d}y\biggr]^{\frac{r}{q'}}
\varrho(t)\,\mathrm{d}t\biggr)^{\frac{1}{r}}.
\end{align*} If $\varsigma^\ast=\rho$ then we have
\begin{align}\label{A}\mathcal{A}_{\rho^{-1}}=&\, \sup_{t>0}\biggl(\int_{b^{-1}(\rho^{-1}(t))}^{a^{-1}(\rho^{-1}(t))}w(x)\,
\mathrm{d}x\biggr)^{\frac{1}{q}}
\biggl(\int_{a(t)}^{b(t)}\varrho(y)\,\mathrm{d}y\biggr)^{\frac{1}{p'}},\\
\mathcal{B}_{\rho^{-1}}=&\,\biggl(\int_0^\infty
\biggl[\int_{b^{-1}(\rho^{-1}(t))}^{a^{-1}(\rho^{-1}(t))}w(x)\,\mathrm{d}x\biggr]^{\frac{r}{p}}
\biggl[\int_{a(t)}^{b(t)}\varrho(y)\,\mathrm{d}y\biggr]^{\frac{r}{p'}}w(t)\,\mathrm{d}t\biggr)^{\frac{1}{r}},\\
(\mathcal{A}_{\rho})^\ast=&\, \sup_{t>0}\biggl(\int_{b^{-1}(t)}^{a^{-1}(t)}w(x)\,\mathrm{d}x\biggr)^{\frac{1}{q}}
\biggl(\int_{a(\rho(t))}^{b(\rho(t))}\varrho(y)\,\mathrm{d}y\biggr)^{\frac{1}{p'}},\\
(\mathcal{B}_{\rho})^\ast=&\,
\biggl(\int_0^\infty\biggl[\int_{b^{-1}(t)}^{a^{-1}(t)}w(x)\,\mathrm{d}x\biggr]^{\frac{r}{q}}
\biggl[\int_{a(\rho(t))}^{b(\rho(t))}\varrho(y)\,\mathrm{d}y\biggr]^{\frac{r}{q'}}
\varrho(t)\,\mathrm{d}t\biggr)^{\frac{1}{r}}.\label{B}
\end{align} Similarly, the functionals $\mathcal{A}_\sigma^\pm$, $\mathcal{B}_\sigma^\pm$, $(\mathcal{A}_{\sigma^{-1}}^\pm)^\ast$,
$(\mathcal{B}_{\sigma^{-1}}^\pm)^\ast$, $\mathcal{A}_{\rho^{-1}}^\pm$, $\mathcal{B}_{\rho^{-1}}^\pm$, $(\mathcal{A}_{\rho}^\pm)^\ast$
and $(\mathcal{B}_{\rho}^\pm)^\ast$ are formed. Note that $\mathcal{A}^-_\varsigma+ \mathcal{A}^+_\varsigma\approx \mathcal{A}_\varsigma,\quad
(\mathcal{A}^-_{\varsigma^\ast})^\ast+ (\mathcal{A}^+_{\varsigma^\ast})^\ast \approx (\mathcal{A}_{\varsigma^\ast})^\ast,$
$\mathcal{B}^-_\varsigma+
\mathcal{B}^+_\varsigma\approx \mathcal{B}_\varsigma,\quad (\mathcal{B}^-_{\sigma^{-1}})^\ast+ (\mathcal{B}^+_{\sigma^{-1}})^\ast \approx
(\mathcal{B}_{\sigma^{-1}})^\ast$
for all $p>1$ and $q>0$, while $(\mathcal{B}^-_{\rho})^\ast+ (\mathcal{B}^+_{\rho})^\ast \approx (\mathcal{B}_{\rho})^\ast$ if $p>1$ and
$q>1$ only.

The main result of this section is stated in the following theorem.
\begin{theorem}\label{t1} Let $p>1$, $q>0$, $q\not=1$ 
and the operator $\mathcal{H}$ be defined by \eqref{1} with $a(x),b(x)$ satisfying the conditions \eqref{3}. Suppose that $\sigma(x)$ is a
strictly increasing continuous function on $(0,\infty)$ such that $a(x)< \sigma(x)< b(x)$ for $x>0$, and $\rho(y)$ is a strictly increasing
continuous function on $(0,\infty)$ such that $b^{-1}(y)< \rho(y)< a^{-1}(y)$ for $y>0$. Let $\varsigma(x)$ denote either $\sigma(x)$ or
$\rho^{-1}(x)$ on $(0,\infty)$.\begin{enumerate}
\item[{\rm (a)}] If $1<p\le q<\infty$ then $\|\mathcal{H}\|_{L_v^{p}\to L_w^{q}}\ll\mathcal{A}_\varsigma$ and $\|\mathcal{H}\|_{L_v^{p}\to L_w^{q}}
\ll(\mathcal{A}_{\varsigma^\ast})^\ast$.\item[{\rm (b)}] Let $p>1$ and $0<q<p<\infty$. Then $\|\mathcal{H}\|_{L_v^{p}\to L_w^{q}}\ll\mathcal{B}_\varsigma$. If $1<q<p<\infty$ then
$\|\mathcal{H}\|_{L_v^{p}\to 
L_w^{q}}\ll(\mathcal{B}_{\varsigma^\ast})^\ast$.\item[{\rm (c)}]  Let $0<q<1<p<\infty$. If $\rho$ is the dual fairway--function satisfying \eqref{10} then $\|\mathcal{H}\|_{L_v^{p}\to L_w^{q}}
\ll[(\mathcal{B}_\rho^-)^\ast+(\mathcal{B}_\rho^+)^\ast]$. If $\sigma$ is the fairway satisfying \eqref{10v} then $\|\mathcal{H}\|_{L_v^{p}
\to L_w^{q}}\ll (\mathcal{B}_{\sigma^{-1}})^\ast$.\item[{\rm (d)}] Let $1<p\le q<\infty$. If $\sigma$ is the fairway--function satisfying \eqref{10v} then $\|\mathcal{H}\|_{L_v^{p}\to L_w^{q}}\gg
\mathcal{A}_\sigma$ and $\|\mathcal{H}\|_{L_v^{p}\to L_w^{q}}\gg(\mathcal{A}_{\sigma^{-1}})^\ast.$
If $\rho$ is the dual fairway--function satisfying \eqref{10} then $\|\mathcal{H}\|_{L_v^{p}\to L_w^{q}}\gg\mathcal{A}_{\rho^{-1}}$ and
$\|\mathcal{H}\|_{L_v^{p}\to L_w^{q}}\gg(\mathcal{A}_\rho)^\ast.$\item[{\rm (e)}] Let $0<q<p<\infty$, $p>1$. If $\sigma$ is the fairway--function satisfying \eqref{10v} then  $\|\mathcal{H}\|_{L_v^{p}\to L_w^{q}}
\gg\mathcal{B}_\sigma$ and $\|\mathcal{H}\|_{L_v^{p}\to L_w^{q}}\gg[(\mathcal{B}_{\sigma^{-1}}^-)^\ast +(\mathcal{B}_{\sigma^{-1}}^+)^\ast]
\approx (\mathcal{B}_{\sigma^{-1}})^\ast.$ If $\rho$ is the dual fairway--function satisfying \eqref{10} then $\|\mathcal{H}\|_{L_v^{p}\to
L_w^{q}}\gg\mathcal{B}_{\rho^{-1}}$ and $\|\mathcal{H}\|_{L_v^{p}\to L_w^{q}}\gg[(\mathcal{B}_\rho^-)^\ast+(\mathcal{B}_\rho^+)^\ast]\ge
(\mathcal{B}_\rho)^\ast.$\end{enumerate}  \end{theorem}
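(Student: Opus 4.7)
I would treat parts (a)--(e) through three building blocks: (i) a decomposition of $\mathcal{H}$ at $\varsigma$ that reduces upper bounds to one-sided Hardy estimates, (ii) a duality argument relating the starred functionals to the same quantities for the adjoint operator, and (iii) test-function lower bounds that invoke the fairway equations \eqref{10v} and \eqref{10}.

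For the sufficient conditions (a), (b), and the $\sigma^{-1}$-assertion of (c), I would write
\[
\mathcal{H}g(x)\;=\;\int_{a(x)}^{\varsigma(x)}\!\!g(y)\,\mathrm{d}y+\int_{\varsigma(x)}^{b(x)}\!\!g(y)\,\mathrm{d}y\;=:\;\mathcal{H}^-g(x)+\mathcal{H}^+g(x)
\]
for an arbitrary strictly increasing $\varsigma$ with $a<\varsigma<b$. Each $\mathcal{H}^\pm$ is a Hardy--Steklov operator with inner boundary $\varsigma$, so no nontrivial fairway equation is needed: classical one-sided Hardy estimates (of Muckenhoupt--Bradley type for $1<p\le q$, and of Maz'ya--Persson--Sinnamon type for $0<q<p$, $p>1$) give $\|\mathcal{H}^\pm\|_{L_v^p\to L_w^q}\ll\mathcal{A}_\varsigma^\pm$ and $\ll\mathcal{B}_\varsigma^\pm$ respectively, where the $V_\Delta$-factor is \emph{full} because the outer boundary of each half is $a$ or $b$. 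Summing (with the triangle inequality in $L_w^q$ if $q\ge 1$, and its quasi-Banach analogue if $q<1$) gives (a), the first assertion of (b), and, after a change of variable with $\varsigma=\sigma^{-1}$, the $(\mathcal{B}_{\sigma^{-1}})^\ast$-assertion of (c).

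The starred functionals come from duality. The (unweighted) adjoint of $\mathcal{H}$ is the Hardy--Steklov operator $\mathcal{H}^\ast h(y)=\int_{b^{-1}(y)}^{a^{-1}(y)}h(x)\,\mathrm{d}x$, and $\|\mathcal{H}\|_{L_v^p\to L_w^q}=\|\mathcal{H}^\ast\|_{L_{w^{1-q'}}^{q'}\to L_{v^{1-p'}}^{p'}}$ whenever $q>1$. Under this duality, $\varsigma$ corresponds to $\varsigma^\ast$ between $b^{-1}$ and $a^{-1}$, and $(\mathcal{A}_{\varsigma^\ast})^\ast,(\mathcal{B}_{\varsigma^\ast})^\ast$ are exactly the unstarred functionals computed for $\mathcal{H}^\ast$. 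Applying the splitting step above to $\mathcal{H}^\ast$ then furnishes the second inequalities in (a) and (b). In (c), with $0<q<1<p$, the classical duality is unavailable; instead one invokes a Sawyer-type criterion for $\mathcal{H}:L_v^p\to L_w^q$ whose proof requires an even splitting of $w$ on each interval $[b^{-1}(y),a^{-1}(y)]$, which is precisely where \eqref{10} enters and produces the sum $(\mathcal{B}_\rho^-)^\ast+(\mathcal{B}_\rho^+)^\ast$.

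All lower bounds in (d) and (e) are obtained by testing $\|\mathcal{H}g\|_{q,w}\le C\|g\|_{p,v}$ on the natural family $g_t(y):=v^{1-p'}(y)\chi_{[a(t),b(t)]}(y)$: one computes $\|g_t\|_{p,v}=V_\Delta(t)^{1/p'}$, observes that for every $x\in\delta(t)$ the segment $[a(x),b(x)]$ contains one of $\Delta^\pm(t)$, and invokes \eqref{10v} to conclude $\mathcal{H}g_t(x)\ge\tfrac12 V_\Delta(t)$. Bounding $\|\mathcal{H}g_t\|_{q,w}$ from below by integrating this pointwise estimate over $\delta(t)$ reproduces $\mathcal{A}_\sigma$ when $p\le q$; for $q<p$ one multiplies the resulting family of pointwise estimates by an auxiliary power of $W_\delta(t)$ and integrates in $t$, by the standard Gagliardo--Sinnamon trick, to recover $\mathcal{B}_\sigma$. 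The remaining lower bounds involving $\sigma^{-1},\rho,\rho^{-1}$ are obtained by running the same argument on $\mathcal{H}^\ast$ with test functions $h_t=w\,\chi_{[b^{-1}(t),a^{-1}(t)]}$ and using \eqref{10} to split $W_\Theta$ evenly at $\rho(t)$. The main technical hurdle, and the genuine novelty relative to earlier work, is the sufficiency in (a)--(c) for an \emph{arbitrary} $\varsigma$: existing proofs invoke \eqref{10v} essentially at the discretization step, and one must instead verify that each half $\mathcal{H}^\pm$ is really controlled by the full $V_\Delta$ rather than by $V_{\Delta^\pm}$. A secondary subtlety is the quasi-Banach range $0<q<1$ in (c), where direct duality collapses and the argument must be routed through a Sawyer-type criterion whose natural splitting forces the dual fairway $\rho$ into play.
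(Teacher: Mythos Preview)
Your splitting $\mathcal{H}=\mathcal{H}^-+\mathcal{H}^+$ at $\varsigma(x)$ is the wrong decomposition. The halves $\mathcal{H}^-g(x)=\int_{a(x)}^{\varsigma(x)}g$ and $\mathcal{H}^+g(x)=\int_{\varsigma(x)}^{b(x)}g$ each still have two moving boundaries, so they are \emph{not} classical one-sided Hardy operators and the Muckenhoupt--Bradley or Maz'ya--Sinnamon criteria do not apply to them. To bound $\|\mathcal{H}^\pm\|$ you would have to invoke a Hardy--Steklov characterization again, and for part~(b) there is no evident way to pass from the integral criterion for each half to the functional $\mathcal{B}_\varsigma$; the mismatch between the intervals $\delta^\pm(t)$ (which involve $a,b,\varsigma$ simultaneously) and the natural intervals for $\mathcal{H}^\pm$ (which involve only $a,\varsigma$ or $\varsigma,b$) does not resolve by monotonicity alone. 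The paper's argument is genuinely different: for (a) it starts from the known two-parameter criterion $\|\mathcal{H}\|\approx\sup_t\sup_{b^{-1}(a(t))\le s\le t}\mathbb{A}(s,t)$ and splits the \emph{inner} supremum at $s=\varsigma^{-1}(a(t))$; for (b) it discretizes $(0,\infty)$ by the orbit $\xi_k=(a^{-1}\circ b)^k(1)$ and writes $\mathcal{H}=T+S$ as a sum of block-diagonal pieces $T_k,S_k$, each of which \emph{is} a genuine one-sided Hardy operator on a fixed interval, so that the classical $q<p$ estimates apply block by block. Part (c) uses a second discretization, this time with endpoints $\xi_k^\pm=\varsigma^{-1}(a(\xi_k)),\varsigma^{-1}(b(\xi_k))$, and it is in bounding the resulting pieces that the balance conditions \eqref{10v} or \eqref{10} are actually used; this is not a Sawyer-type criterion but a direct computation.

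Your sketch of the lower bounds in (d) via the test functions $g_t=v^{1-p'}\chi_{[a(t),b(t)]}$ is fine and equivalent to what the paper does. For (e), however, ``multiply by a power of $W_\delta(t)$ and integrate by the Gagliardo--Sinnamon trick'' seriously understates the work required in the case $0<q<1<p$ with the dual fairway $\rho$: there one must build a two-level partition $\{\varkappa_j^k\}$ of each $[\eta_k,\eta_{k+1})$ using iterates of $b$, define an explicit test function $g_a$ supported on the blocks, and control $\|g_a\|_{p,v}$ by a careful combinatorial estimate that exploits \eqref{10} on every block. The difficulty is that the supports $[a(m_j^k),m_j^k]$ overlap, and the bookkeeping of which blocks contribute to which $[m_{j-1}^k,m_j^k)$ is where the balance condition \eqref{10} on $w$ is indispensable. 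None of this is captured by a single-parameter integration in $t$.
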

\begin{proof} (a) To prove the inequalities $\|\mathcal{H}\|_{L_v^{p}\to L_w^{q}}\ll\mathcal{A}_\varsigma$ and $\|\mathcal{H}\|_{L_v^{p}\to
L_w^{q}}\ll\mathcal{A}_\varsigma^\ast$ we refer to the estimate
\begin{equation}\label{u13}\|\mathcal{H}\|_{L_v^p\to L_w^q}
\approx\sup_{t>0}\sup_{b^{-1}(a(t))\le s\le t}\mathbb{A}(s,t)\end{equation} from \cite[Th. 3.2]{SUmia} (see also \cite[Th. 3]{SU} and
\cite[Th. 2.2]{HS}) with $$\mathbb{A}(s,t):= \biggl(\int_s^t
w(x)\,\mathrm{d}x\biggr)^{\frac{1}{q}}
\biggl(\int_{a(t)}^{b(s)}\varrho(y)\,\mathrm{d}y\biggr)^{\frac{1}{p'}}.$$ Put $\tau:=\varsigma^{-1}(a(t))$. We can write that
\begin{equation*}\sup_{b^{-1}(a(t))\le s\le t}\mathbb{A}(s,t)
\le \sup_{b^{-1}(a(t))\le s\le
\tau<t}\mathbb{A}(s,t)+\sup_{\tau\le s\le
t}\mathbb{A}(s,t)=:H_1(t)+H_2(t),\end{equation*} where
\begin{equation*}H_1(t)=\sup_{b^{-1}(\varsigma(\tau))\le
s\le\tau} \biggl(\int_s^{a^{-1}(\varsigma(\tau))}
w(x)\,\mathrm{d}x\biggr)^{\frac{1}{q}}
\biggl(\int_{\varsigma(\tau)}^{b(s)}\varrho(y)\,\mathrm{d}y\biggr)^{\frac{1}{p'}} \end{equation*} and \begin{equation*}H_2(t)\le
\sup_{\tau\le s\le t}
\bigl[W_{\delta^+}(s)\bigr]^{\frac{1}{q}}
\bigl[V_{\Delta}(s)\bigr]^{\frac{1}{p'}},\end{equation*} since $t\le a^{-1}(\varsigma(s)))$, $a(s)\le a(t)$ if $\tau\le s\le t$. Then
\begin{equation*}H_1(t)+H_2(t)\le \bigl[W_{\delta}(\tau)\bigr]^{\frac{1}{q}}
\bigl[V_{\Delta^+}(\tau)\bigr]^{\frac{1}{p'}}+\sup_{s>0} \bigl[W_{\delta^+}(s)\bigr]^{\frac{1}{q}}
\bigl[V_{\Delta}(s)\bigr]^{\frac{1}{p'}}
\ll \mathcal{A}_\varsigma,\end{equation*} and the estimate $\|\mathcal{H}\|_{L_v^{p}\to L_w^{q}}\ll\mathcal{A}_\varsigma$ is proved. Moreover,
it yields, by duality, that $\|\mathcal{H}\|_{L_v^{p}\to L_w^{q}}=
\|\mathcal{H}^\ast\|_{L_{w^{1-q'}}^{q'}\to L_{v^{1-p'}}^{p'}}\ll
(\mathcal{A}_{\varsigma^\ast})^\ast$ where \begin{equation*}
\mathcal{H}^\ast f(y)=\int_{b^{-1}(y)}^{a^{-1}(y)}f(x)\,\mathrm{d}x.\end{equation*}

(b) To prove the estimate $\|\mathcal{H}\|_{L_v^{p}\to L_w^{q}}\ll\mathcal{B}_\varsigma$ in the case $0<q<p<\infty$, $p>1$ we define a
sequence $\{\xi_k\}_{k\in\mathbb{Z}}\subset(0,\infty)$ so that
\begin{equation}\label{seq}\xi_0=1,\hspace{1cm}\xi_k=(a^{-1}\circ
b)^k(1),\hspace{1cm}k\in{\mathbb{Z}},\end{equation}
and put $\eta_k:=a(\xi_k)=b(\xi_{k-1}),$ $\Delta_k:=[\xi_k,\xi_{k+1}),$ $\delta_k:=[\eta_k,\eta_{k+1})$. Breaking
the semiaxis $(0,\infty)$ by the points $\{\xi_k\}_{k\in\mathbb{Z}}$ we
decompose the operator $\mathcal{H}$ into the sum
\begin{equation} \label{29}
\mathcal{H}=T+S
\end{equation} of two block--diagonal operators
$T=\sum_{k\in{\mathbb{Z}}}
T_k $ and $S=\sum_{k\in{\mathbb{Z}}} S_k,$ where
\begin{align*}T_kg(x)=&\int_{a(x)}^{a(\xi_{k+1})}g(y)\,\mathrm{d}y, &
T_k&:L_v^{p}(\delta_k)\to L_w^{q}(\Delta_{k});\\
S_kg(x)=&\int_{b(\xi_k)}^{b(x)}g(y)\,\mathrm{d}y, &
S_k&:L_v^{p}(\delta_{k+1})\to L_w^{q}(\Delta_{k})\end{align*}
and $\sqcup_{k\in\mathbb{Z}}\Delta_k=(0,\infty)$, $\sqcup_{k\in\mathbb{Z}}\delta_k=(0,\infty)$.
Taking into account
the point $\tau_k:=\varsigma^{-1}(b(\xi_k))=\varsigma^{-1}(a(\xi_{k+1}))$ we write
\begin{equation}\label{iSk}S_kg(x)=S_k^-g(x)+S_{k}^+g(x),\end{equation} where \begin{align*}
S_k^-g(x)=&\,\chi_{(\xi_k,\tau_k)}(x)S_kg(x), & S_k^-&:L_v^p(b(\xi_{k}),b(\tau_k))\to L_w^q(\xi_k,\tau_k);\\
S_k^+g(x)=&\,\chi_{(\tau_k,\xi_{k+1})}(x)S_kg(x), & S_k^+&:L_v^p(\delta_{k+1})\to L_w^q(\tau_k,\xi_{k+1}).\end{align*}
Applying the estimate (2.44) from \cite[Lem. 2.3]{SUmia} to the norm of the operator $S_k^-$ we obtain, using~$q-r/p+qr/p=r/p'$, that
\begin{align*}\|S_{k}^-\|
^r\approx&\,
\int_{\xi_k}^{\tau_k}\biggl(\int_{\xi_k}^t\biggl[\int_{b(\xi_k)}^{b(x)}\varrho(y) \,\mathrm{d}y\biggr]^q
w(x)\,\mathrm{d}x\biggr)^{\frac{r}{p}}
\biggl[\int_{b(\xi_k)}^{b(t)}\varrho(y)\,\mathrm{d}y\biggr]^{q-\frac{r}{p}}w(t) \,\mathrm{d}t\\\le&\,
\int_{\xi_k}^{\tau_k}\biggl(\int_{\xi_k}^t
w(x)\,\mathrm{d}x\biggr)^{\frac{r}{p}}
\biggl[\int_{b(\xi_k)}^{b(t)}\varrho(y)\,\mathrm{d}y\biggr]^{\frac{r}{p'}}w(t) \,\mathrm{d}t.\end{align*} Since $\xi_{k}\le t\le\varsigma^{-1}
(b(\xi_k))$ then $b^{-1}(\varsigma(t))\le\xi_k$ and $\varsigma(t)\le b(\xi_k).$
Therefore, \begin{equation}\label{35}
\|S_{k}^-\|^r\ll\int_{\xi_k}^{\tau_k}\bigl[W_{\delta^-}(t)\bigr]^{\frac{r}{p}}
\bigl[V_{\Delta^+}(t)\bigr]^{\frac{r}{p'}}
w(t)\,\mathrm{d}t.\end{equation}
Analogously, on the strength of (2.42) from \cite[Lem. 2.3]{SUmia}, we obtain that
\begin{align}\label{44}\|S_{k}^+\|^r\approx&\,
\int_{\tau_k}^{\xi_{k+1}}\biggl(\int_{t}^{\xi_{k+1}}
w(x)\,\mathrm{d}x\biggr)^{\frac{r}{p}}
\biggl(\int_{b(\xi_k)}^{b(t)}
\varrho(y)\,\mathrm{d}y\biggr)^{\frac{r}{p'}}
w(t)\,\mathrm{d}t\nonumber\\\le&\,
\int_{\tau_k}^{\xi_{k+1}}\bigl[W_{\delta^+}(t)\bigr]^{\frac{r}{p}}
\bigl[V_{\Delta}(t)\bigr]^{\frac{r}{p'}}
w(t)\,\mathrm{d}t,\end{align} since $\varsigma^{-1}(a(\xi_{k+1}))\le t \le\xi_{k+1}$ and, therefore, $\xi_{k+1}\le a^{-1}(\varsigma(t))$ and
$a(t)\le a(\xi_{k+1})=b(\xi_k)$. Thus, by \eqref{iSk}---\eqref{44} it
holds that
\begin{equation}\label{(i)}\|S_k\|
^r\ll \int_{\xi_k}^{\tau_{k}}\bigl[W_{\delta^-}\bigr]^{\frac{r}{p}}
\bigl[V_{\Delta^+}\bigr]^{\frac{r}{p'}}
w
+\int_{\tau_k}^{\xi_{k+1}}\bigl[W_{\delta^+}\bigr]^{\frac{r}{p}}
\bigl[V_{\Delta}\bigr]^{\frac{r}{p'}}
w
.\end{equation} To estimate $\|T_k\|$ we make a decomposition
\begin{equation}\label{Tkw}
T_{k}g(x)=\chi_{(\xi_k,\tau_k)}(x)T_{k}g(x)+\chi_{(\tau_k,\xi_{k+1})}(x)
T_kg(x)=:T_{k}^-g(x)+T_{k}^+g(x).\end{equation} By (2.50) from \cite[Lem. 2.4]{SUmia} we obtain that
\begin{equation*}\|T_{k}^-\|
^r\approx\int_{\xi_k}^{\tau_k}\biggl(\int_{\xi_k}^{t}
w(x)\,\mathrm{d}x\biggr)^{\frac{r}{p}}\biggl(
\int_{a(t)}^{b(\xi_k)}
\varrho(y)\,\mathrm{d}y \biggr)^{\frac{r}{p'}}w(t)\,\mathrm{d}t.\end{equation*} Since $\xi_k\le t\le\varsigma^{-1}(b(\xi_k))$ then
$b^{-1}(\varsigma(t))\le\xi_k$, $b(\xi_k)\le b(t)$ and, therefore, \begin{equation}\label{57}\|T_{k}^-\|
^r\ll \int_{\xi_k}^{\tau_k}
\bigl[W_{\delta^-}(t)\bigr]^{\frac{r}{p}}
\bigl[V_{\Delta}(t)\bigr]^{\frac{r}{p'}}
w(t)\,\mathrm{d}t.\end{equation} Further, by (2.52) from \cite[Lem. 2.4]{SUmia} and in view of $q-r/p+qr/p=r/p'$, \begin{align}\label{58}\|T_{k}^+\|^r
\approx&\,\int_{\tau_k}^{\xi_{k+1}}\biggl(\int_t^{\xi_{k+1}}\biggl[\int_{a(x)}^{b(\xi_k)}
\varrho 
\biggr]^qw(x)\,\mathrm{d}x\biggr)^{\frac{r}{p}}
\biggl(\int_{a(t)}^{b(\xi_k)}\varrho (y)\,\mathrm{d}y
\biggr)^{q-\frac{r}{p}}
w(t)\,\mathrm{d}t\nonumber\\
\le&\,\int_{\tau_k}^{\xi_{k+1}}\biggl(\int_t^{\xi_{k+1}}w(x)\, \mathrm{d}x\biggr)^{\frac{r}{p}}
\biggl(\int_{a(t)}^{b(\xi_k)}\varrho(y)\,\mathrm{d}y\biggr)^{\frac{r}{p'}}
w(t)\,\mathrm{d}t\nonumber\\
\le&\,\int_{\tau_k}^{\xi_{k+1}}
\bigl[W_{\delta^+}(t)\bigr]^{\frac{r}{p}}
\bigl[V_{\Delta^-}(t)\bigr]^{\frac{r}{p'}}
w(t)\,\mathrm{d}t\end{align} since $\varsigma^{-1}(a(\xi_{k+1}))\le t\le\xi_{k+1}$ and, therefore, $\xi_{k+1}\le a^{-1}(\varsigma(t))$,
$b(\xi_k)=a(\xi_{k+1})\le\varsigma(t)$. It follows from
\eqref{Tkw}---\eqref{58} that
\begin{equation*}
\|T_k\|
^r\ll\int_{\xi_k}^{\tau_k}
\bigl[W_{\delta^-}\bigr]^{\frac{r}{p}}
\bigl[V_{\Delta}\bigr]^{\frac{r}{p'}}
w
+\int_{\tau_k}^{\xi_{k+1}}
\bigl[W_{\delta^+}\bigr]^{\frac{r}{p}}
\bigl[V_{\Delta^-}\bigr]^{\frac{r}{p'}}
w
.\end{equation*} From here, \eqref{(i)} and \eqref{29} we find by \cite[Lem. 1]{SU4} (see also \cite[Lem. 3.1]{SUmia} or \cite[Lem. 2.1]{U1}) that
\begin{multline*}\|\mathcal{H}\|^r\approx
\|T\|^r+\|S\|^r\approx \sum_k\Bigl[\|S_k\|^r +\|T_k\|
^r\Bigr]\ll\sum_k\biggl[
\int_{\xi_k}^{\tau_{k}}\bigl[W_{\delta^-}\bigr]^{\frac{r}{p}}
\bigl[V_{\Delta}\bigr]^{\frac{r}{p'}}
w
\\+\int_{\tau_k}^{\xi_{k+1}}\bigl[W_{\delta^+}\bigr]^{\frac{r}{p}}
\bigl[V_{\Delta}\bigr]^{\frac{r}{p'}}
w
\biggr]\le\sum_k
\int_{\xi_k}^{\xi_{k+1}}\bigl[W_{\delta}\bigr]^{\frac{r}{p}}
\bigl[V_{\Delta}\bigr]^{\frac{r}{p'}}
w
=\mathcal{B}_\varsigma^r.\end{multline*} Now, the required estimate $\|\mathcal{H}\|_{L_v^{p}\to L_w^{q}}\ll\mathcal{B}_\varsigma$ is
proven. The inequality $$\|\mathcal{H}\|_{L_v^{p}\to L_w^{q}}=
\|\mathcal{H}^\ast\|_{L_{w^{1-q'}}^{q'}\to L_{v^{1-p'}}^{p'}}\ll
(\mathcal{B}_{\varsigma^\ast})^\ast$$ follows by duality for $q>1
$.

(c) Assume that $0<q<1<p<\infty$ and introduce the sequence $\{\xi_k\}_{k\in\mathbb{Z}}\subset(0,\infty)$ by \eqref{seq}. Let $\varsigma$
denote either the dual fairway--function $\rho^{-1}$ satisfying \eqref{10} or the fairway--function $\sigma$ satisfying \eqref{10v}. We put
$$\xi_k^-:=\varsigma^{-1}(a(\xi_k)),\quad\xi_k^+:=\varsigma^{-1}(b(\xi_k)),\quad \Delta_k^-:=[\xi_k^-,\xi_k),\quad\Delta_k^+:=[\xi_k,\xi_k^+)$$
and decompose the operator into the sum \begin{equation}\label{u2}\mathcal{H}=H^-+H^+\end{equation} of operators $H^\pm=\sum_{k\in\mathbb{Z}}
H^\pm_k$, where
$$H_k^\pm g(x)=\chi_{\Delta_k^\pm}(x)\int_{a(x)}^{b(x)}g(y)\,\mathrm{d}y$$ and $H_k^-\colon L_v^p[a(\xi_k^-),b(\xi_k))\to L_w^q(\Delta_k^-),$
$H_k^+\colon L_v^p[a(\xi_k),b(\xi_k^+))\to L_w^q(\Delta_k^+).$

Note that
if $\varsigma=\sigma$ then $(\mathcal{B}_{\sigma^{-1}}^-)^\ast+ (\mathcal{B}_{\sigma^{-1}}^+)^\ast\approx (\mathcal{B}_{\sigma^{-1}})^\ast$
because of \eqref{10v}. Thus, the two required estimates on $\|\mathcal{H}\|_{L_v^{p}\to L_w^{q}}$ will be established in this part of the
theorem if we prove the inequality  $\|\mathcal{H}\|_{L_v^{p}\to L_w^{q}}\ll[(\mathcal{B}_{\varsigma^{\ast}}^-)^\ast+
(\mathcal{B}_{\varsigma^{\ast}}^+)^\ast]$.

Suppose $(\mathcal{B}_{\varsigma^{\ast}}^\pm)^\ast<\infty$. Since $q'<0$ then, for simplicity, we 
assume that \begin{equation*}
W_{\Theta}(t)<\infty\quad\textrm{and}\quad 0<V_{\vartheta^\pm}(t)<\infty\quad\textrm{for any}\quad t>0.\end{equation*}  Note that
$$(H_k^-|g|)(x)\le\int_{a(\xi_k^-)}^{b(x)}|g(y)|\,\mathrm{d}y,\qquad
(H_k^+|g|)(x)\le\int_{a(x)}^{b(\xi_k^+)}|g(y)|\,\mathrm{d}y$$
and, therefore, by \cite[Lems. 2.3, 2.4]{SUmia},\begin{gather*}
\|H_k^-\|
^r\ll
\int_{\Delta_k^-}\biggl(\int_z^{\xi_k}w(x)\,\mathrm{d}x\biggr)^{\frac{r}{p}}
\biggl(\int_{a(\xi_k^-)}^{b(z)}\varrho(y)\,\mathrm{d}y\biggr)^{\frac{r}{p'}}w(z)\, \mathrm{d}z,\\
\|H_k^+\|^r\ll
\int_{\Delta_k^+}\biggl(\int_{\xi_k}^zw(x)\,\mathrm{d}x\biggr)^{\frac{r}{p}}
\biggl(\int_{a(z)}^{b(\xi_k^+)}\varrho(y)\,\mathrm{d}y\biggr)^{\frac{r}{p'}}w(z)\, \mathrm{d}z.\end{gather*} We have, by $r/p'=r/q'+1$ and in view of $q'<0$, that
\begin{align}\label{u1}
\|H_k^-\|
^r&\ll\int_{a(\xi_k^-)}^{b(\xi_k)} \biggl(\int_{\max\{\xi_k^-,b^{-1}(t)\}} ^{\xi_k}\biggl[\int_z^{\xi_k}w
\biggr]^{\frac{r}{p}}
\biggl[\int_{a(\xi_k^-)}^{b(z)}\varrho 
\biggr]^{\frac{r}{q'}} w(z)\, \mathrm{d}z\biggr) \varrho(t)\,\mathrm{d}t \nonumber\\
&\le\int_{a(\xi_k^-)}^{b(\xi_k)} \biggl(\int_{\max\{\xi_k^-,b^{-1}(t)\}} ^{\xi_k} w
\biggr)^{\frac{r}{q}}
\biggl(\int_{a(\xi_k^-)}^{\max\{b(\xi_k^-),t\}}\varrho 
\biggr)^{\frac{r}{q'}} \varrho(t)\,\mathrm{d}t\nonumber\\
&=\biggl[\int_{a(\xi_k^-)}^{a(\xi_k)}+\int_{a(\xi_k)}^{b(\xi_k)} \biggr]\biggl(\int_{\max\{\xi_k^-,b^{-1}(t)\}} ^{\xi_k} w
\biggr)^{\frac{r}{q}}
\biggl(\int_{a(\xi_k^-)}^{\max\{b(\xi_k^-),t\}}\varrho 
\biggr)^{\frac{r}{q'}} \varrho(t)\,\mathrm{d}t\nonumber\\
&=:I_k^-+II_k^-.\end{align} Note
that\begin{equation*}
I_k^-=\int_{a(\xi_k^-)}^{a(\xi_k)}\varrho(t)\,\mathrm{d}t \biggl(\int_{\xi_k^-} ^{\xi_k} w(x)\,\mathrm{d}x\biggr)^{\frac{r}{q}}
\biggl(\int_{a(\xi_k^-)}^{b(\xi_k^-)}\varrho(y)\,\mathrm{d}y\biggr)^{\frac{r}{q'}}\end{equation*} and
\begin{equation*}
II_k^-\le\int_{a(\xi_k)}^{b(\xi_k)}\biggl(\int_{b^{-1}(t)} ^{\xi_k} w(x)\,\mathrm{d}x\biggr)^{\frac{r}{q}}
\biggl(\int_{a(\xi_k^-)}^{t}\varrho(y)\,\mathrm{d}y\biggr)^{\frac{r}{q'}} \varrho(t)\,\mathrm{d}t,\end{equation*} because $q'<0$. Since
$a(\xi_k^-)\le a(\varsigma^{-1}(t))$ and $\xi_k\le a^{-1}(t)$ if $a(\xi_k)\le t\le b(\xi_k)$,
\begin{equation}\label{u12}
II_k^-
\le \int_{a(\xi_k)}^{b(\xi_k)}\bigl[W_{\Theta}(t)\bigr]^{\frac{r}{q}}
\bigl[V_{\vartheta^-}(t)\bigr]^{\frac{r}{q'}} \varrho(t)\,\mathrm{d}t.\end{equation}
If $\varsigma=\sigma$ then, by \eqref{10v} and in view of $a(\xi_k)=\sigma(\xi_k^-)$,
\begin{equation*}
I_k^-=\int_{\sigma(\xi_k^-)}^{b(\xi_k^-)}\varrho(t)\,\mathrm{d}t \biggl(\int_{\xi_k^-} ^{\xi_k} w(x)\,\mathrm{d}x\biggr)^{\frac{r}{q}}
\biggl(\int_{a(\xi_k^-)}^{b(\xi_k^-)}\varrho(y)\,\mathrm{d}y\biggr)^{\frac{r}{q'}}; \end{equation*} and, since $a(\xi_k^-)\le a(\sigma^{-1}(t))$
for $\sigma(\xi_k^-)\le t\le b(\xi_k^-)$ and $[\xi_k^-,\xi_k)\subseteq\Theta(t)$, 
\begin{equation}
I_k^-\le\int_{a(\xi_k)}^{b(\xi_k^-)}\bigl[W_{\Theta}(t)\bigr]^{\frac{r}{q}} \bigl[V_{\vartheta^-}(t)\bigr]^{\frac{r}{q'}}\varrho(t)\,\mathrm{d}t.
\end{equation}
If $\varsigma=\rho^{-1}$ then, by \eqref{10},
\begin{equation*}
I_k^-=\int_{a(\xi_k^-)}^{a(\xi_k)}\varrho(t)\,\mathrm{d}t\biggl(\int_{\xi_{k-1}} ^{\xi_k^-} w(x)\,\mathrm{d}x\biggr)^{\frac{r}{q}}
\biggl(\int_{a(\xi_k^-)}^{b(\xi_k^-)}\varrho(y)\,\mathrm{d}y\biggr)^{\frac{r}{q'}}; \end{equation*} and,
since $b(\rho(t))\le b(\xi_k^-)$ if $a(\xi_k^-)\le t\le a(\xi_k)$ and $[\xi_{k-1},\xi_k^-)\subseteq\Theta(t)$, then
\begin{equation}\label{u3}
I_k^-
\le \int_{a(\xi_k^-)}^{a(\xi_k)}\bigl[W_{\Theta}(t)\bigr]^{\frac{r}{q}}
\bigl[V_{\vartheta^+}(t)\bigr]^{\frac{r}{q'}} \varrho(t)\,\mathrm{d}t.\end{equation}
From \eqref{u1}---\eqref{u3}, on the strength of \cite[Lem. 1]{SU4} (see also \cite[Lem. 3.1]{SUmia} or \cite[Lem. 2.1]{U1}), we obtain that \begin{multline} \label{u4}\|H^-\|^r\approx
\sum_k\Bigl[\|H^-_{2k-1}\|
^r+\|H^-_{2k}\|
^r\Bigr]\ll\sum_{k}\int_{a(\xi_k)}^{b(\xi_k)}\bigl[W_{\Theta}\bigr]^{\frac{r}{q}}
\bigl[V_{\vartheta^-}\bigr]^{\frac{r}{q'}} \varrho 
\\+\sum_k\int_{a(\xi_k^-)}^{a(\xi_k)}\bigl[W_{\Theta}\bigr]^{\frac{r}{q}}
\bigl[V_{\vartheta^+}\bigr]^{\frac{r}{q'}} \varrho 
\le[(\mathcal{B}_{\varsigma^{\ast}}^-)^\ast]^r +[(\mathcal{B}_{\varsigma^{\ast}}^+)^\ast]^r.\end{multline}
Analogously to \eqref{u1}---\eqref{u3}, we can find that
\begin{equation*}\|H_k^+\|^r
\ll \int_{a(\xi_k)}^{b(\xi_k)}\bigl[W_{\Theta}\bigr]^{\frac{r}{q}}
\bigl[V_{\vartheta^+}\bigr]^{\frac{r}{q'}} \varrho 
+\int_{b(\xi_k)}^{b(\xi_k^+)}\bigl[W_{\Theta}\bigr]^{\frac{r}{q}}
\bigl[V_{\vartheta^-}\bigr]^{\frac{r}{q'}} \varrho 
\end{equation*} and, therefore, \begin{multline}\label{u5}\|H^+\|^r\ll\sum_{k}\int_{a(\xi_k)}^{b(\xi_k)}\bigl[W_{\Theta}\bigr]^{\frac{r}{q}}
\bigl[V_{\vartheta^+}\bigr]^{\frac{r}{q'}} \varrho 
+\sum_k\int_{b(\xi_k)}^{b(\xi_k^+)}\bigl[W_{\Theta}\bigr]^{\frac{r}{q}}
\bigl[V_{\vartheta^-}\bigr]^{\frac{r}{q'}} \varrho 
\\\le[(\mathcal{B}_{\varsigma^{\ast}}^-)^\ast]^r+[(\mathcal{B}_{\varsigma^{\ast}}^+)^\ast]^r.\end{multline} Then $\|\mathcal{H}\|_{L_v^{p}\to L_w^{q}}
\ll[(\mathcal{B}_{\varsigma^{\ast}}^-)^\ast+(\mathcal{B}_{\varsigma^{\ast}}^+)^\ast]$ by \eqref{u2}, \eqref{u4} and \eqref{u5}.

(d) Let $\varsigma$ denote either the fairway--function $\sigma$ satisfying \eqref{10v} or the dual fairway--function $\rho^{-1}$ satisfying
\eqref{10}. 

To prove the required lower estimates in the case $1<p\le q<\infty$ we refer to \eqref{u13} again. We have for any $t>0$:
\begin{multline*}\bigl[W_{\delta^-}(t)\bigr]^{\frac{1}{q}} \bigl[V_{\Delta^-}(t)\bigr]^{\frac{1}{p'}}+\bigl[W_{\delta^+}(t)\bigr]^{\frac{1}{q}}
\bigl[V_{\Delta^+}(t)\bigr]^{\frac{1}{p'}}\\
=\mathbb{A}(b^{-1}(\varsigma(t)),t)+\mathbb{A}(t,a^{-1}(\varsigma(t)))\ll
\sup_{t>0}\sup_{b^{-1}(a(t))\le s\le t}
\mathbb{A}(s,t).\end{multline*} Thus and \eqref{u13}, if $\varsigma=\sigma\in\eqref{10v}$ then $\|\mathcal{H}\|_{L_v^{p}\to L_w^{q}}\gg\mathcal{A}_\sigma$; and, by duality, if $\varsigma^\ast=\rho\in\eqref{10}$ then $\|\mathcal{H}\|_{L_v^{p}\to L_w^{q}}=\|\mathcal{H}^\ast\|_{L_{w^{1-q'}}^{q'}\to L_{v^{1-p'}}^{p'}}\gg(\mathcal{A}_\rho)^\ast$. The substitutions $\sigma(t)=\tau$ and $\rho(t)=\tau$ yield, respectively, the estimate $\|\mathcal{H}\|_{L_v^{p}\to L_w^{q}}\gg(\mathcal{A}_{\sigma^{-1}})^\ast$ from $\|\mathcal{H}\|_{L_v^{p}\to L_w^{q}}\gg\mathcal{A}_\sigma$ if $\varsigma^{\ast}=\sigma^{-1}$, and $\|\mathcal{H}\|_{L_v^{p}\to L_w^{q}}\gg\mathcal{A}_{\rho^{-1}}$ from
$\|\mathcal{H}\|_{L_v^{p}\to L_w^{q}}\gg(\mathcal{A}_\rho)^\ast$ for the case
$\varsigma=\rho^{-1}$.

(e) It is known from \cite[Th. 4.1]{SUmia} that $\|\mathcal{H}\|_{L_v^{p}\to L_w^{q}}\gg \mathcal{B}_\sigma$ if $0<q<p<\infty$, $p>1$ and if
the fairway--function $\sigma$ satisfies the condition \eqref{10v}. The method of the proof of this estimate on $\|\mathcal{H}\|_{L_v^{p}\to L_w^{q}}$ in \cite[Th. 4.1]{SUmia} implies
also the inequality $\|\mathcal{H}\|_{L_v^{p}\to L_w^{q}}\gg (\mathcal{B}_{\sigma^{-1}})^\ast$ for the case when $\sigma\in\eqref{10v}$. Indeed,
suppose first that $\sigma(x)=x$. Then \begin{equation}\label{u17}\|\mathcal{H}\|_{L_v^{p}\to L_w^{q}}\gg\lim_{N\to\infty}\Bigl(\sum_{|k|\le N}
\lambda_k\Bigr)^{\frac{1}{r}}\end{equation} (see \cite[pp.~477--481]{SUmia} for details), where $$\lambda_k=\int_{\eta_{k-1}}^{\eta_{k+2}}
\biggl(\int_{t}^{\eta_{k+2}}w(x)\, \mathrm{d}x\biggr)^{\frac{r}{q}}\biggl(\int_{\eta_{k-1}}^t\varrho(y)\, \mathrm{d}y\biggr)^{\frac{r}{q'}}
\varrho(t)\,\mathrm{d}t$$ and
\begin{equation}\label{u15} \eta_0=1,\quad\eta_{k+1}=a^{-1}(\eta_k),\quad\eta_{k-1}=a(\eta_k),\qquad k\in\mathbb{Z}. \end{equation} If $q'>0$
then, on the strength of $\eqref{10v}\ni\sigma(x)=x$ and, similarly to that in the proof of the estimate $\|\mathcal{H}\|_{L_v^{p}\to L_w^{q}}\gg
\mathcal{B}_\sigma$ in \cite[Th. 4.1]{SUmia},
\begin{equation}\label{u18}\lambda_k\ge
\int_{\eta_{k}}^{\eta_{k+1}}\biggl(\int_{\Theta^+(t)}w(x)\, \mathrm{d}x\biggr)^{\frac{r}{q}}\biggl(\int_{\vartheta(t)}\varrho(y)\, \mathrm{d}y
\biggr)^{\frac{r}{q'}}\varrho(t)\,\mathrm{d}t,\end{equation} since $\Theta^+(t)=[t,a^{-1}(t))\subseteq[t,\eta_{k+2})$ and $\vartheta^-(t)=[a(t),t)
\subseteq[\eta_{k-1},t)$ if $t\in[\eta_{k},\eta_{k+1})$. The same estimate on $\lambda_k$ for $q'<0$ follows by
\begin{multline*}\int_{\eta_{k-1}}^{t}\varrho(y)\, \mathrm{d}y=\int_{\eta_{k-1}}^{\eta_k}\varrho(y)\, \mathrm{d}y +\int_{\eta_{k}}^{t}\varrho(y)\,
\mathrm{d}y=\int_{\eta_{k}}^{b(\eta_k)}\varrho(y)\, \mathrm{d}y
+\int_{\eta_{k}}^{t}\varrho(y)\, \mathrm{d}y\\\le
2\int_{\eta_{k}}^{b(t)}\varrho(y)\, \mathrm{d}y \le
2\int_{a(t)}^{b(t)}\varrho(y)\, \mathrm{d}y,\qquad t\in[\eta_k,\eta_{k+1}). \end{multline*} Combining \eqref{u17} with \eqref{u18} we obtain that
$\|\mathcal{H}\|_{L_v^{p}\to L_w^{q}}\gg (\mathcal{B}_{\sigma^{-1}}^+)^\ast$ in view of
$\sqcup_{k\in\mathbb{Z}}[\eta_k,\eta_{k+1})=(0,\infty)$. The inequality $\|\mathcal{H}\|_{L_v^{p}\to L_w^{q}}\gg (\mathcal{B}_{\sigma^{-1}}^-)^\ast$
can be proved similarly, using the intervals $[\zeta_k,\zeta_{k+1})$ formed by the boundary $b(x)$: \begin{equation}\label{u19} \zeta_0=1,
\quad\zeta_{k+1}=b(\zeta_k),\quad\zeta_{k-1}=b^{-1}(\zeta_k),\qquad k\in\mathbb{Z}. \end{equation} Therefore, we have that $\|\mathcal{H}\|_{L_v^{p}\to L_w^{q}}\gg(\mathcal{B}_{\sigma^{-1}}^-)^\ast+(\mathcal{B}_{\sigma^{-1}}^+)^\ast\approx
(\mathcal{B}_{\sigma^{-1}})^\ast$ if $\sigma(x)=x$. The same statement with general $\sigma\in\eqref{10v}$ follows from the case $\sigma(x)=x$ by
substitutions $\tilde{a}(x)=\sigma^{-1}(a(x))$, $\tilde{b}(x)=\sigma^{-1}b((x))$, $\tilde{f}(y)=f(\sigma(t))\sigma'(t)$ and  $\tilde{v}(x)=v(\sigma(x))(\sigma'(x))^{1-p}$.

Now let $\rho$ be the dual fairway satisfying \eqref{10}.
If $1<q<p<\infty$ then the estimates $\|\mathcal{H}\|_{L_v^{p}\to L_w^{q}}\gg (\mathcal{B}_\rho)^\ast$ and $\|\mathcal{H}\|_{L_v^{p}\to L_w^{q}}\gg
\mathcal{B}_{\rho^{-1}}$ follows, by duality, from $\|\mathcal{H}\|_{L_v^{p}\to L_w^{q}}\gg \mathcal{B}_\sigma$ and $\|\mathcal{H}\|_{L_v^{p}\to
L_w^{q}}\gg(\mathcal{B}_{\sigma^{-1}})^\ast$, respectively 
(see also
\cite[Th. 2.2]{SU3}). Since $r/q'>0$, we also have $(\mathcal{B}_\rho)^\ast\approx(\mathcal{B}_\rho^-)^\ast+(\mathcal{B}_\rho^+)^ \ast.$

It only rests to prove the estimates $\|\mathcal{H}\|_{L_v^{p}\to L_w^{q}}\gg [(\mathcal{B}_\rho^-)^\ast+(\mathcal{B}_\rho^+)^ \ast]$
and \\$\|\mathcal{H}\|_{L_v^{p}\to L_w^{q}}\gg \mathcal{B}_{\rho^{-1}}$ for the case when $0<q<1<p<\infty$ and $\rho\in\eqref{10}$
. To this end we assume that $\|\mathcal{H}\|_{L_v^{p}\to L_w^{q}}<\infty$.
The inequalities $\|\mathcal{H}\|_{L_v^{p}\to L_w^{q}}\gg [(\mathcal{B}_\rho^-)^\ast+(\mathcal{B}_\rho^+)^ \ast]$ and
$\|\mathcal{H}\|_{L_v^{p}\to L_w^{q}}\gg \mathcal{B}_{\rho^{-1}}$ will be established if we show that $\|\mathcal{H}\|_{L_v^{p}\to L_w^{q}}
\gg (\mathcal{B}_\rho^\pm)^\ast$ and $\|\mathcal{H}\|_{L_v^{p}\to L_w^{q}}\gg \mathcal{B}_{\rho^{-1}}^\pm$, where $\mathcal{B}_{\rho^{-1}}\approx \mathcal{B}_{\rho^{-1}}^-+\mathcal{B}_{\rho^{-1}}^+$.

Assume that $\rho(y)=y$ first. In order to prove that
$\|\mathcal{H}\|_{L_v^{p}\to L_w^{q}}\gg (\mathcal{B}_\rho^+)^\ast$ and $\|\mathcal{H}\|_{L_v^{p}\to L_w^{q}}\gg \mathcal{B}_{\rho^{-1}}^-$
we introduce the sequence \eqref{u15} and form intervals $\varkappa_j^k$, $j=1,\ldots,j_b$, on each $[\eta_k,\eta_{k+1})$, $k\in\mathbb{Z}$, as follows: 
\begin{enumerate} 
\item if
$\eta_{k+1}\le b(\eta_k)$ then $j_b=1$ and $[\eta_k,\eta_{k+1})=\varkappa_1^k$; 
\item if
$b(\eta_k)<\eta_{k+1}\le b(b(\eta_k))$ then $j_b=2$ and $[\eta_k,\eta_{k+1})=\cup_{j=1}^2\varkappa_j^k$, where $\varkappa_1^k=[\eta_k,b(\eta_k))$ and $\varkappa_2^k=[b^{-1}(\eta_{k+1}),\eta_{k+1})$;
\item if
$b(b(\eta_k))<\eta_{k+1}\le b(b(b(\eta_k)))$ then $j_b=3$ and $[\eta_k,\eta_{k+1})=\cup_{j=1}^3\varkappa_j^k$, where $\varkappa_1^k=[\eta_k,b(\eta_k))$, $\varkappa_2^k=[b(\eta_k),b(b(\eta_k)))$ and $\varkappa_3^k=[b^{-1}(\eta_{k+1}),\eta_{k+1})$; \item[~] $\ldots$~. \end{enumerate} Finally, we obtain for each $k\in\mathbb{Z}$ that $[\eta_k,\eta_{k+1})=\cup_{j=1}^{j_b}\varkappa_j^k$, where $\varkappa_j^k=[b^{(j-1)}(\eta_k),b^{(j)}(\eta_k))$ for $j=1,\ldots,j_b-1$ and 
$\varkappa_{j_b}^k=[b^{-1}(\eta_{k+1}),\eta_{k+1})$. It holds for any $k\in\mathbb{Z}$ and $j\in\{1,\ldots,j_b\}$ that
\begin{equation}\label{u21}
W_\Theta(y)\approx\int_{\varkappa_j^k}w(x)\,\mathrm{d}x,\qquad y\in\varkappa_j^k\end{equation} (see \cite[Lem. 2.7 with $b^{-1}$, $a^{-1}$ and $\rho$ instead of $\sigma$]{SUmia} for details). Let $$l_{k,j}:=\biggl(\int_{\varkappa_{j}^k}w(x)\,\mathrm{d}x\biggr)^{\frac{r}{pq}} \biggl(\int_{a(m_{j}^k)}^{m_{j}^k}\varrho(y)\,
\mathrm{d}y\biggr)^{\frac{r}{pq'}},$$ where $m_j^k$ is the right end point of the interval $\varkappa_j^k$, and define a function $$g_a(t):=\sum_{k=-N}^{N}\sum_{j=1}^{j_b(k)}\chi_{[a(m_{j}^k),{m_{j}^k}]}(t)
\,l_{k,j} \,\varrho(t),\qquad N\in\mathbb{N}.$$ By the construction, $[a(x),b(x)]\supset 
[a(m_{j}^k),m_{j}^k]$ if $x\in\varkappa_j^k$, and each $x\in(0,\infty)$ has intersection with two $\varkappa_j^k$ at most.
Then, by $r/(pq')+1=r/(p'q)$ and $r/p+1=r/q$, we obtain for any $N\in\mathbb{N}$ that \begin{align}\label{u14}
2\|\mathcal{H}g_a\|^q_{w,q}&\ge \sum_{k=-N}^{N}
\sum_{j=1}^{j_b(k)}\int_{\varkappa_{j}^k}\biggl(\int_{a(x)}^{b(x)}g_a(y)\,\mathrm{d}y \biggr)^qw(x)\,\mathrm{d}x\nonumber\\&\ge\sum_{k=-N}^{N}
\sum_{j=1}^{j_b(k)}\int_{\varkappa_j^k} \biggl(\int_{a(x)}^{b(x)} l_{k,j}\,\varrho(y)\chi_{[a(m_{j}^k),{m_{j}^k}]}(y)\,\mathrm{d}y
\biggr)^qw(x)\,\mathrm{d}x\nonumber\\
&= \sum_{k=-N}^{N}\sum_{j=1}^{j_b(k)}l_{k,j}^q\biggl(\int_{a(m_{j}^k)}^{m_{j}^k}  \varrho(y)\,\mathrm{d}y \biggr)^q\int_{\varkappa_j^k}
w(x)\,\mathrm{d}x\nonumber\\
&=\sum_{k=-N}^{N}\sum_{j=1}^{j_b(k)}\biggl(\int_{\varkappa_j^k}
w
\biggr)^{\frac{r}{q}}\biggl(\int_{a(m_{j}^k)}^{m_{j}^k}
\varrho 
\biggr)^{\frac{r}{p'}}=:\sum_{k=-N}^{N}\sum_{j=0}^{j_b(k)-1}\lambda_{k,j}.
\end{align} 
Since 
the intervals $[m_{j-1}^k,m_j^k)$, which are formed by the right end points $m_j^k$ of $\varkappa_j^k$ with $m_0^k=\eta_k$, are disjoint, then
\begin{align}\label{u22}
\|g_a\|^p_{v,p}=&\sum_{k=-N-1}^{N}\sum_{j=1}^{j_b(k)}\int_{m_{j-1}^k}^{m_{j}^k} |g_a(y)|^pv(y)\,\mathrm{d}y\nonumber\\=&
\sum_{k=-N-1}^{N}\sum_{j=1}^{j_b(k)}\int_{m_{j-1}^k}^{m_{j}^k} \biggl[\sum_{i=j}^{j_b(k)}l_{k,i}^p
\chi_{[a(m_{i}^k),{m_{i}^k}]}(y) \nonumber\\+\sum_{i=1}^{j_b(k+1)}l_{k+1,i}^p&\chi_{[a(m_{i}^{k+1}),{m_{i}^{k+1}}]}(y) \biggr]\varrho(y)\,
\mathrm{d}y
=:\sum_{k=-N-1}^{N}\sum_{j=1}^{j_b(k)}I_{k,j}+II_{k,j}.\end{align}  Denote \begin{align*}\Xi(k)&:=\{i=j,\ldots,j_b(k)
\colon ({m_{j-1}^k},{m_{j}^k})\cap(a(m_{i}^{k}),{m_{i}^{k}})\not=\emptyset\},\\
\Xi(k+1)&:=\{i=1,\ldots,j_b(k+1)\colon ({m_{j-1}^k},{m_{j}^k})\cap(a(m_{i}^{k+1}),{m_{i}^{k+1}})\not=\emptyset\} \end{align*} and
consider $|k|\le N$ first. Notice that for all such $k$ we have $\Xi(k)=\{j,\ldots,j_b\}$  since $a(m_{j_b(k)}^k)=a(m_0^{k+1})=
a(\eta_{k+1})=\eta_k=m_0^k$ by the construction. $\Xi(k+1)=$ is empty if $j_b(k+1)=1$ and $\Xi(k+1)=\{1,\ldots,j_b(k+1)-1\}$ if $j_b(k+1)>1$. 
Therefore, any fixed $|k|\le N$ and $j(k)$,
\begin{equation*}
I_{k,j}+II_{k,j}= \sum_{i=j}^{j_b(k)}l_{k,i}^p \int_{m_{j-1}^k}^{m_{j}^k}\varrho(y)\,\mathrm{d}y +
\sum_{i\in\Xi(k+1)}l_{k+1,i}^p \int_{\max\{m_{j-1}^k,a(m_{i}^{k+1})\}}^{m_{j}^k}\varrho(y)\,\mathrm{d}y.\end{equation*} Since $q'<0$,
then $$l_{k+1,i}\le\biggl(\int_{\varkappa_i^{k+1}}w(x)\,\mathrm{d}x\biggr)^{\frac{r}{pq}} \biggl(
\int_{\max\{m_{j-1}^k,a(m_{i}^{k+1})\}}^{m_{j}^k}\varrho(y)\,\mathrm{d}y\biggr) ^{\frac{r}{pq'}},\quad i\in\Xi(k+1).$$ Moreover, on the strength of \eqref{10} (see also \eqref{u21}),
$$\int_{\eta_{k+1}}^{\eta_{k+2}}w(x)\,\mathrm{d}x= \int_{b^{-1}(\eta_{k+1})}^{\eta_{k+1}}w(x)\,\mathrm{d}x\approx 
\int_{\varkappa_{j_b}^{k}}w(x)\,\mathrm{d}x.$$
Thus, in view of $r/q'+1=r/p'>0$ and $r/q>1$, \begin{align}\label{u23} II_{k,j}&\le\sum_{i\in \Xi(k+1)}\biggl(
\int_{\varkappa_i^{k+1}}w(x)\,\mathrm{d}x\biggr)^{\frac{r}{q}} \biggl(\int_{\max\{m_{j-1}^k,a(m_{i}^{k+1})\}}^{m_{j}^k}\varrho(y)\,
\mathrm{d}y\biggr)^{\frac{r}{p'}}\nonumber\\
&\le\biggl(\int_{a(m_{j_b}^{k})}^{m_{j_b}^k}\varrho(y)\,\mathrm{d}y\biggr)^{\frac{r}{p'}} \sum_{i\in\Xi(k+1)}\biggl(\int_{\varkappa_{i}^{k+1}}w(x)\,\mathrm{d}x\biggr)^{\frac{r}{q}} \nonumber\\
&\le\biggl(\int_{a(m_{j_b}^{k})}^{m_{j_b}^k}\varrho(y)\,\mathrm{d}y\biggr)^{\frac{r}{p'}} \biggl( \sum_{i\in\Xi(k+1)}\int_{\varkappa_i^{k+1}}w(x)\,\mathrm{d}x\biggr) ^{\frac{r}{q}}\nonumber\\&\le 2\biggl(\int_{a(m_{j_b}^{k})}^{m_{j_b}^k}\varrho(y)\,\mathrm{d}y\biggr)^{\frac{r}{p'}}
\biggl(\int_{\eta_{k+1}}^{\eta_{k+2}}w(x)\,\mathrm{d}x\biggr)^{\frac{r}{q}}\approx \lambda_{k,j_b}.
\end{align} Since $q'<0$ and $a(m_{j_b(k)}^k)=m_0^k$,
$$l_{k,i}\le\biggl(\int_{\varkappa_{i}^{k}}w(x)\,\mathrm{d}x\biggr)^{\frac{r}{pq}} \biggl(\int_{m_{j-1}^k}^{m_{j}^k}\varrho(y)\,
\mathrm{d}y\biggr) ^{\frac{r}{pq'}},\qquad i\in\Xi(k).$$ Therefore, by \eqref{10}, analogously to \eqref{u23},
\begin{align}\label{u24} I_{k,j}&\le\biggl(\int_{m_{j-1}^k}^{m_{j}^k}\varrho(y)\,\mathrm{d}y\biggr) ^{\frac{r}{p'}}\ \sum_{i=j}^{j_b(k)}
\biggl(\int_{\varkappa_i^{k}}w(x)\, \mathrm{d}x\biggr)^{\frac{r}{q}} \nonumber\\&\le 2
\biggl(\int_{a(m_{j}^k)}^{m_{j}^k}\varrho (y)\,\mathrm{d}y
\biggr) ^{\frac{r}{p'}}\ \biggl(\int_{m_{j-1}^{k}}^{m_{j_b}^{k}}w(x)\,
\mathrm{d}x
\biggr)^{\frac{r}{q}}\nonumber\\&\le
\biggl(\int_{a(m_{j}^k)}^{m_{j}^k}\varrho (y)\,\mathrm{d}y
\biggr) ^{\frac{r}{p'}}\ \biggl(\int_{m_{j-1}^{k}}^{a^{-1}(m_{j}^{k})}w(x)\,
\mathrm{d}x
\biggr)^{\frac{r}{q}}\nonumber\\
&=2^{\frac{r}{q}}\biggl(\int_{a(m_j^k)}^{m_{j}^k}\varrho (y)\,\mathrm{d}y
\biggr) ^{\frac{r}{p'}}\ \biggl(\int_{m_{j-1}^{k}}^{m_{j}^{k}}w(x)\,
\mathrm{d}x
\biggr)^{\frac{r}{q}}\ll\lambda_{k,j}.
\end{align} If $k=-N-1$ then $\Xi(k)=\emptyset$ by definition of $g_a$.
If $\Xi(k+1)=\Xi(-N)\not=\emptyset$ then, by the same reason as before,
\begin{align}\label{u27} II_{-N-1,j}\le&\sum_{i\in\Xi(-N)}
l_{-N,i}^p \int_{\max\{m_{j-1}^{-N-1},a(m_{i}^{-N})\}}^{m_{1}^{-N}}\varrho(y)\,\mathrm{d}y
\nonumber\\\le&\sum_{i\in\Xi(-N)}
\biggl(\int_{a(m_{i}^{-N})}^{m_{1}^{-N}}\varrho(y)\,\mathrm{d}y\biggr) ^{\frac{r}{p'}} \biggl(\int_{m_{i-1}^{-N}}^{m_{i}^{-N}}w(x)\,
\mathrm{d}x\biggr)^{\frac{r}{q}}\nonumber\\
\le&\biggl(\int_{a(m_{1}^{-N})}^{m_{1}^{-N}}\varrho 
\biggr) ^{\frac{r}{p'}}\sum_{i\in\Xi(-N)}
 \biggl(\int_{m_{i-1}^{-N}}^{m_{i}^{-N}}w 
\biggr)^{\frac{r}{q}}\ll\lambda_{-N,1}.
\end{align}
Combining \eqref{u22}---\eqref{u24} and \eqref{u27} we obtain that
$$\|g_a\|^p_{v,p}\ll\sum_{k=-N}^{N}\sum_{j=1}^{j_b(k)}\lambda_{k,j}.$$
Thus and \eqref{u14} it follows, letting $N\to\infty$, that \begin{equation}\label{u26} \|\mathcal{H}\|_{L_v^p\to L_w^q}\gg
\Bigl(\sum_{k\in\mathbb{Z}}\sum_{j=1}^{j_b(k)}\lambda_{k,j}\Bigr)^{\frac{1}{r}}. \end{equation}

To obtain the estimate $\|\mathcal{H}\|_{L_v^{p}\to L_w^{q}}\gg (\mathcal{B}_\rho^+)^\ast$ from \eqref{u26} notice that  \begin{equation*}
\biggl(\int_{\varkappa_{j}^k}  \varrho(y)\,\mathrm{d}y \biggr)^{\frac{r}{p'}}\approx \int_{\varkappa_{j}^k}
\biggl(\int_t^{m_{j}^k}  \varrho 
\biggr)^{\frac{r}{q'}}\varrho(t)\,\mathrm{d}t
\ge\int_{\varkappa_{j}^k}\biggl(\int_t^{b(t)}  \varrho 
\biggr)^{\frac{r}{q'}}\varrho(t)\,\mathrm{d}t\end{equation*}
by the construction of $\varkappa_j^k$ provided $q'<0$. 
Thus, by \eqref{u21}:
\begin{multline*}\lambda_{k,j}\gg \biggl(\int_{\varkappa_j^{k}}w(x)\, \mathrm{d}x\biggr)^{\frac{r}{q}} \int_{\varkappa_j^k}
\biggl(\int_{\vartheta^+(t)}  \varrho(y)\,\mathrm{d}y \biggr)^{\frac{r}{q'}}\varrho(t)\,\mathrm{d}t\\
\approx \int_{\varkappa_j^k} [W_{\Theta}(t)] ^{\frac{r}{q}}[V_{\vartheta^+}(t)]^{\frac{r}{q'}}\varrho(t)\,\mathrm{d}t \ge \int_{m_{j-1}^k}^{m_j^k} [W_{\Theta}(t)] ^{\frac{r}{q}}[V_{\vartheta^+}(t)]^{\frac{r}{q'}}\varrho(t)\,\mathrm{d}t.
\end{multline*} Now the estimate $\|\mathcal{H}\|_{L_v^p\to L_w^q}$ $\gg (\mathcal{B}_\rho^+)^\ast$ in the case
$\rho(y)=y$ follows from \eqref{u26} in view of $\cup_{k,j}[m_j^k,m_{j+1}^k)=(0,\infty)$.

To prove $\|\mathcal{H}\|_{L_v^{p}\to L_w^{q}}\gg \mathcal{B}_{\rho^{-1}}^-$ if $\rho(y)=y$ notice that, by \eqref{u21}, any two neighbour $\int_{\varkappa_j^k} w$ are equivalent to each other. Therefore, 
the sum, say $\gamma_{k,j}$, 
of two neighbour $\lambda_{k,j}$ 
can be estimated from below as follows: \begin{equation*} \label{u28}
\gamma_{k,j}\gg\biggl(\int_{\varkappa_{j+1}^k} w(x)\,\mathrm{d}x\biggr)^{\frac{r}{q}}\biggl(\int_{a(m_{j}^k)}^{m_{j+1}^k}  \varrho(y)\,
\mathrm{d}y \biggr)^{\frac{r}{p'}}=:\mu_{k,j}\end{equation*} (here $m_{j_b+1}^k=m_1^{k+1}$ and $\varkappa_{j+1}^k=\varkappa_{1}^{k+1}$ if $j=j_b$). 
Therefore, by \eqref{u21}, \begin{multline*}\label{u29}\mu_{k,j}\approx\biggl(
\int_{a(m_{j}^k)}^{m_{j+1}^k}  \varrho(y)\,\mathrm{d}y \biggr)^{\frac{r}{p'}} \int_{\varkappa_{j+1}^{k}}
[W_\delta(t)]^{\frac{r}{p}}w(t)\,\mathrm{d}t\\
\ge\int_{\varkappa_{j+1}^k}
 [W_{\delta}(t)] ^{\frac{r}{p}}[V_{\Delta^-}(t)]^{\frac{r}{p'}}w(t)\,\mathrm{d}t
\ge \int_{m_{j}^k}^{m_{j+1}^k} 
 [W_{\delta}(t)] ^{\frac{r}{p}}[V_{\Delta^-}(t)]^{\frac{r}{p'}}w(t)\,\mathrm{d}t.
\end{multline*} It now follows from \eqref{u26} that
\begin{multline}\|\mathcal{H}\|_{L_v^p\to L_w^q}\gg \Bigl(\sum_{k\in\mathbb{Z}}\sum_{j=1}^{j_b(k)} 2\,\lambda_{k,j}\Bigr)^{\frac{1}{r}}\gg
\Bigl(\sum_{k\in\mathbb{Z}}\sum_{j=1}^{j_b(k)} \mu_{k,j}\Bigr)^{\frac{1}{r}}\\\gg \biggl(\sum_{k\in\mathbb{Z}}\sum_{j=1}^{j_b(k)}
\int_{m_j^k}^{m_{j+1}^k}  [W_{\delta}] ^{\frac{r}{p}}[V_{\Delta^-}]^{\frac{r}{p'}}w
\Bigr)^{\frac{1}{r}}=
\mathcal{B}_{\rho^{-1}}^-. \end{multline} Thus, $\|\mathcal{H}\|_{L_v^p\to L_w^q}\gg \mathcal{B}_{\rho^{-1}}^-$ if $\rho(y)=y$.

Analogously, using the sequence \eqref{u19} instead of \eqref{u15}, one can show that  $\|\mathcal{H}\|_{L_v^p\to L_w^q}\gg
(\mathcal{B}_\rho^-)^\ast$ and $\|\mathcal{H}\|_{L_v^p\to L_w^q}\gg \mathcal{B}_{\rho^{-1}}^+$ if $\rho(y)=y$. The general case of $\rho$ follows
from $\rho(y)=y$ by substitutions $\tilde{a}(x)=a(\rho(x))$, $\tilde{b}(x)=b(\rho(x))$ and $\tilde{w}(x)=w(\rho(x))\rho'(x)$.
\end{proof}

\begin{corollary}
 Let $p>1$, $q>0$, $q\not=1$ 
and the operator $\mathcal{H}$ be defined by \eqref{1} with $a(x),b(x)$ satisfying the conditions \eqref{3}. Suppose that $\sigma(x)$
is the fairway--function on $(0,\infty)$ satisfying \eqref{10v} and $\rho(y)$ is the dual fairway--function on $(0,\infty)$ satisfying
\eqref{10}. Then \begin{equation} \label{pq}\|\mathcal{H}\|_{L_v^{p}\to L_w^{q}}\approx \mathcal{A}_\sigma
\approx(\mathcal{A}_{\sigma^{-1}})^\ast \approx \mathcal{A}_{\rho^{-1}}\approx(\mathcal{A}_{\rho})^\ast\end{equation} if $1<p\le q<\infty$.
If $0<q<p<\infty$, $p>1$ then \begin{equation} \label{qp}\|\mathcal{H}\|_{L_v^{p}\to L_w^{q}}\approx\mathcal{B}_\sigma
\approx(\mathcal{B}_{\sigma^{-1}})^\ast \approx\mathcal{B}_{\rho^{-1}}\approx (\mathcal{B}_\rho^-)^\ast+(\mathcal{B}_\rho^+)^\ast,\end{equation}
where $(\mathcal{B}_\rho^-)^\ast+(\mathcal{B}_\rho^+)^\ast\approx (\mathcal{B}_\rho)^\ast$
if $1<q<p<\infty$.\end{corollary}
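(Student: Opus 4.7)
The plan is to derive both equivalence chains \eqref{pq} and \eqref{qp} directly from Theorem~\ref{t1} by sandwiching each of the listed quantities between a matching upper and lower bound on $\|\mathcal{H}\|_{L_v^{p}\to L_w^{q}}$. Since the theorem already supplies these bounds (with the hypotheses $\sigma\in\eqref{10v}$ and $\rho\in\eqref{10}$ that the corollary imposes), this is a genuine corollary: no new analysis is needed, only careful bookkeeping of which assertion of parts (a)--(e) applies in each sub-range of the parameters.

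For \eqref{pq} with $1<p\le q<\infty$, I would apply Theorem~\ref{t1}(a) twice. First, with $\varsigma:=\sigma$ (so $\varsigma^{\ast}=\sigma^{-1}$), it yields $\|\mathcal{H}\|_{L_v^{p}\to L_w^{q}}\ll\mathcal{A}_\sigma$ and $\|\mathcal{H}\|_{L_v^{p}\to L_w^{q}}\ll(\mathcal{A}_{\sigma^{-1}})^\ast$. Second, with $\varsigma:=\rho^{-1}$ (so $\varsigma^{\ast}=\rho$), it yields $\|\mathcal{H}\|_{L_v^{p}\to L_w^{q}}\ll\mathcal{A}_{\rho^{-1}}$ and $\|\mathcal{H}\|_{L_v^{p}\to L_w^{q}}\ll(\mathcal{A}_\rho)^\ast$. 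The four matching reverse inequalities are precisely the content of Theorem~\ref{t1}(d), valid because by hypothesis $\sigma$ satisfies \eqref{10v} and $\rho$ satisfies \eqref{10}. Combining the two directions produces \eqref{pq}.

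For \eqref{qp} with $p>1$, $0<q<p$, $q\ne 1$, I would split the upper bounds into two regimes according to the sign of $q-1$. The unconditional estimates $\|\mathcal{H}\|_{L_v^{p}\to L_w^{q}}\ll\mathcal{B}_\sigma$ and $\|\mathcal{H}\|_{L_v^{p}\to L_w^{q}}\ll\mathcal{B}_{\rho^{-1}}$ follow from Theorem~\ref{t1}(b) applied with $\varsigma:=\sigma$ and $\varsigma:=\rho^{-1}$, respectively. For the dual-type upper bounds, when $1<q<p$ the second (duality) assertion of part (b) supplies $\|\mathcal{H}\|_{L_v^{p}\to L_w^{q}}\ll(\mathcal{B}_{\sigma^{-1}})^\ast$ and $\|\mathcal{H}\|_{L_v^{p}\to L_w^{q}}\ll(\mathcal{B}_\rho)^\ast$, while the equivalence $(\mathcal{B}_\rho^-)^\ast+(\mathcal{B}_\rho^+)^\ast\approx(\mathcal{B}_\rho)^\ast$ noted in the preliminaries (valid precisely for $q>1$) then yields the collapsed form. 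When $0<q<1<p$, the two dual upper bounds $\|\mathcal{H}\|_{L_v^{p}\to L_w^{q}}\ll(\mathcal{B}_{\sigma^{-1}})^\ast$ and $\|\mathcal{H}\|_{L_v^{p}\to L_w^{q}}\ll[(\mathcal{B}_\rho^-)^\ast+(\mathcal{B}_\rho^+)^\ast]$ are exactly the two assertions of part (c). All four matching lower bounds hold throughout the range and are provided by part (e), together with its internal equivalence $[(\mathcal{B}_{\sigma^{-1}}^-)^\ast+(\mathcal{B}_{\sigma^{-1}}^+)^\ast]\approx(\mathcal{B}_{\sigma^{-1}})^\ast$.

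The principal (very minor) delicacy is thus simply to track the role of $q=1$: one must pair the split quantity $(\mathcal{B}_\rho^-)^\ast+(\mathcal{B}_\rho^+)^\ast$ (which is the appropriate object for all $0<q<p$) with its collapsed counterpart $(\mathcal{B}_\rho)^\ast$ only under the additional assumption $1<q<p$, exactly as phrased in the statement of the corollary. No additional obstacle is expected, and the proof reduces to writing out the chain of applications of parts (a)--(e) of Theorem~\ref{t1} described above.
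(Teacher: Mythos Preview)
Your proposal is correct and is precisely the intended argument: the paper states this result as a corollary of Theorem~\ref{t1} without any separate proof, and the derivation is exactly the bookkeeping you describe --- pairing the upper bounds from parts (a), (b), (c) with the matching lower bounds from parts (d), (e), and invoking the pre-stated equivalence $(\mathcal{B}_\rho^-)^\ast+(\mathcal{B}_\rho^+)^\ast\approx(\mathcal{B}_\rho)^\ast$ for $q>1$.
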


\begin{remark} The norm $\|\mathcal{H}\|_{L_v^{p}\to L_w^{q}}$ is characterized by Theorem 4 from \cite[Ch. 7, \S~1.5]{KA}
if $q=1\le p\le\infty$ or $p=1< q\le\infty$. 
Notice also that if the operator $\mathcal{H}:L_v^p\to L_w^q$ is bounded in the case $0<p<1,$ $0<q\le\infty$ then $\|\mathcal{H}\|_{L_v^{p}\to L_w^{q}}=0$ (see \cite[Th.~2]{PS}).

\end{remark}

\section{Fractional inequality}
\label{sec:2}

Let $q>0$ and $p>1$, weights $u$ and $v$ be non--negative on $(0,\infty)$, and a function $f \in \mathcal{W}_{p,v}^1$ satisfy one
of the boundary conditions: $f(0)=0$ or $f(\infty)=0$.
The weighted differential Hardy inequality of the form
\begin{equation}\label{hh}
\left\| f\right\| _{q,u}\leq C\left\| f^{\prime }\right\| _{p,v}
\end{equation} is very well--studied (see \cite{OK} and references there). Higher order
differential inequalities of Hardy type
\begin{equation}\label{M1}
\left\| f\right\| _{q,u}\leq C\left\| f^{(k)}\right\| _{p,v}\qquad(k=1,2,\ldots)
\end{equation} with $f$ such that $f^{(j)}(0)=0$, $j=0,1,\ldots,k-1,$ or $f^{(j)}(\infty)=0$, $j=0,1,\ldots,k-1,$ were studied in [\ref{Ssmz}, \ref{Step}].
An overdetermined analog of \eqref{M1} (i.e. with some additional conditions on $f$) appeared in [\ref{N1}, \ref{NS}, \ref{NS1}]. Some norm inequalities involving fractional derivatives of smooth functions were considered in
the papers \cite{Gr} and \cite{J}. 
For further historical remarks 
we refer to the books \cite{MKP} and \cite{KP}.

This section continues the study of weighted differential inequalities and investigates relations between Lebesgue norms of fractional 
and first derivatives of functions from $\mathcal{W}_{p,v}^1$ on
$(0,\infty)$. As it was mentioned in the introduction, the question arose from the paper \cite{HS}
by H.P. Heinig and G. Sinnamon, where the authors obtained separate necessary and sufficient
conditions for the inequality \eqref{00} to hold. Here we give new conditions for the validity of \eqref{00}
with general weights $u$ and $v$ (see Theorem 3.1). In addition, the case $u=1$ is
separately considered and illustrated by two examples. 
In particular, we state criteria for \eqref{00} to hold with a power weight $v$ in the case $1<p\le q<\infty$ (see Proposition 3.3), and for $v$ with antiderivative $-1/\gamma(1+x^\gamma)$ if $0<q<p<\infty$ (see Proposition 3.7). 

Consider the inequality \eqref{00} in the form
\begin{equation}
   \biggl(\int_0^\infty\!\!\!\! \int_0^\infty |f(x)-f(y)|^q
   \tilde{u}(x,y)\,\mathrm{d}x\,\mathrm{d}y\biggr)^{\frac{1}{q}} \le C \biggl(\int_0^\infty |f'|^p
   v
\biggr)^{\frac{1}{p}}, \label{ineq}
\end{equation} where $\displaystyle\tilde{u}(x,y):=\frac{u(x,y)}{|x-y|^{1+\lambda q}}$ and a constant $0<C<\infty$ is the best possible and independent of $f$. 
Analogously to that established in the proof of Corollary 3.5 from \cite{HS} one can state that \eqref{ineq} is equivalent to the inequality 
\begin{equation*}
   \left(\int_0^1 \left[\int_0^\infty \left|\int_{\xi x}^x f'(z)\,\mathrm{d}z\right|^q
    \tilde{U}_\xi(x)\,\mathrm{d}x\right]\mathrm{d}\xi\right)^{\frac{1}{q}} \le C \left(\int_0^\infty |f'|^p
   v
\right)^{\frac{1}{p}} 
\end{equation*} with $\tilde{U}_\xi(x)=x(\tilde{u}(x,\xi x)+\tilde{u}(\xi x,x))$. Thus, the initial inequality \eqref{00} becomes of the form
\begin{equation}\label{00'}
\left(\int_0^1 \left[\int_0^\infty \left|\int_{\xi x}^x f'(z)\,\mathrm{d}z\right|^q
    U_\xi(x)\,\mathrm{d}x\right]\mathrm{d}\xi\right)^{\frac{1}{q}} \le C \left(\int_0^\infty |f'|^p
   v
\right)^{\frac{1}{p}}
\end{equation} with $$U_\xi(x)=\frac{u(x,\xi x)+u(\xi x,x)}{x^{\lambda q}(1-\xi)^{1+\lambda q}}.$$

Let $u\equiv 1.$ Then $U_\xi(x)=2 x^{-\lambda q}(1-\xi)^{-1-\lambda q}$ and, following the above transformations, \eqref{00} is equivalent to
the inequality
\begin{equation}\label{u=1}
\left(\int_0^1 \left[\int_0^\infty \left|\int_{\xi x}^x f'(z)\,\mathrm{d}z\right|^q
    \frac{\mathrm{d}x}{x^{\lambda q}}\right]\frac{\mathrm{d}\xi} {(1-\xi)^{1+\lambda q}}\right)^{\frac{1}{q}} \le C \left(\int_0^\infty |f'|^p
   v
\right)^{\frac{1}{p}},
\end{equation} involving the Hardy--Steklov integral operator \begin{equation}\label{u10} \mathcal{H}_\xi g(x):=\int_{\xi x}^x g(z)\,\mathrm{d}z
\qquad(0\le \xi\le 1)\end{equation} from $L_v^p$ to $L_w^q$ with $w(x)=x^{-\lambda q}.$ Putting $\|\mathcal{H}_\xi\|:=\|\mathcal{H}_\xi\|_{L_v^p
\to L_w^q}$ we write \begin{multline}\label{first}
C^q=\sup_{\|f'\|\not=0}\frac{\int_0^1 \left[\int_0^\infty \left|\int_{\xi x}^x f'(z)\,\mathrm{d}z\right|^q
    \frac{\mathrm{d}x}{x^{\lambda q}}\right]\frac{\mathrm{d}\xi}{(1-\xi)^{1+\lambda q}}}{\|f'\|_{p,v}^q}\\\le\int_0^1 \sup_{\|f'\|\not=0}
\frac{\int_0^\infty \left|\int_{\xi x}^x f'(z)\,\mathrm{d}z\right|^q
    \frac{\mathrm{d}x}{x^{\lambda q}}}{\|f'\|_{p,v}^q} \frac{\mathrm{d}\xi}
{(1-\xi)^{1+\lambda q}}=
\int_0^1 \frac{\|\mathcal{H}_\xi\|^q \mathrm{d}\xi}
{(1-\xi)^{1+\lambda q}}.\end{multline} On the other hand, 
suppose that $\|\mathcal{H}_\xi\|\approx F(\xi),$ where a functional
\begin{equation*}
F(\xi)=\begin{cases}\mathcal{A}_\varsigma\quad\textrm{or}\quad (\mathcal{A}_{\varsigma^{-1}})^\ast, &1<p\le q<\infty,\\
\mathcal{B}_\varsigma\quad\textrm{or}\quad
(\mathcal{B}_{\varsigma^{-1}}^-)^\ast +(\mathcal{B}_{\varsigma^{-1}}^+)^\ast, &0<q<p<\infty,\ p>1
\end{cases}\end{equation*} is independent of $f$. Assume that a function $f_\xi\in AC(0,\infty)$ is such that $f'_\xi=g_\xi$ a.e., 
$g_\xi\ge 0$ 
and
\begin{equation}\label{second}\!\|\mathcal{H}_\xi\|^q\!=\!\sup_
{\substack{\|g\|\not=0\\g\geq 0}}\!\frac{\int_0^\infty\! \Bigl(\int_{\xi x}^x g(z)\,\mathrm{d}z\!\Bigr)^q\!\frac{\mathrm{d}x}{x^{\lambda q}}}{\|g\|_{p,v}^q}\ge
\frac{\int_0^\infty\! \Bigl(\int_{\xi x}^x g_\xi(z)\,\mathrm{d}z\!\Bigr)^q
  \! \frac{\mathrm{d}x}{x^{\lambda q}}}{\|g_\xi\|_{p,v}^q}\gg F^q(\xi).
\end{equation} In other words, $g_\xi$ is supposed to be a test function corresponding to the chosen type of the functional $F(\xi)$.
Functions of such a type appear in the proof of the estimates $\|\mathcal{H}_\xi\|\gg F(\xi)$ (see Theorem 2.2.(d,e) for details). The required function $f_\xi$ can be easily constructed from $g_\xi$. 
We have from \eqref{second} that for some $0<\xi_0<1$
\begin{align}\label{third}
C^q&\ge\sup_{\substack{\|f'\|\not=0\\f'\geq 0}}\frac{\int_0^\infty \left[\int_0^1 \left(\int_{\xi x}^x f'(z)\,\mathrm{d}z\right)^q
  \frac{\mathrm{d}\xi}{(1-\xi)^{1+\lambda q}}  \right]\frac{\mathrm{d}x}{x^{\lambda q}}}{\|f'\|_{p,v}^q}\nonumber\\&\ge\sup_{\substack{\|f'\|\not=0\\f'\geq 0}}\frac{\int_0^\infty \left[\int_0^{\xi_0} \left(\int_{\xi x}^x f'(z)\,\mathrm{d}z\right)^q
  \frac{\mathrm{d}\xi}{(1-\xi)^{1+\lambda q}}  \right]\frac{\mathrm{d}x}{x^{\lambda q}}}{\|f'\|_{p,v}^q}\nonumber\end{align}\begin{align}
&\ge\sup_{\substack{\|f'\|\not=0\\f'\geq 0}}\frac{\int_0^\infty \left(\int_{\xi_0 x}^x f'(z)\,\mathrm{d}z\right)^q
 \frac{\mathrm{d}x}{x^{\lambda q}}}{\|f'\|_{p,v}^q}\int_0^{\xi_0} \frac{\mathrm{d}\xi}{(1-\xi)^{1+\lambda q}}\nonumber
\\&\ge\frac{\int_0^\infty\! \left(\int_{\xi_0 x}^x f_{\xi_0}'(z)\,\mathrm{d}z
\right)^q
 \frac{\mathrm{d}x}{x^{\lambda q}}}{\|f_{\xi_0}'\|_{p,v}^q}\int_0^{\xi_0}\!\!\!\! \frac{\mathrm{d}\xi}{(1-\xi)^{1+\lambda q}}\nonumber\\&\gg F^q(\xi_0)\biggl(\frac{1}{(1-\xi_0)^{\lambda q}}-1\biggr).\end{align}
Thus and \eqref{first}, in view of $F(\xi)\approx \|\mathcal{H}_\xi\|,$ \begin{equation}\label{main}\sup_{0<\xi<1} \|\mathcal{H}_{\xi}\|\biggl(\frac{1}{(1-\xi)^{\lambda q}}-1\biggr)^{\frac{1}{q}}\ll C\ll \biggl(\int_0^1 \frac{\|\mathcal{H}_\xi\|^q\,\mathrm{d}\xi} {(1-\xi)^{1+\lambda q}}\biggr)^{\frac{1}{q}}.\end{equation}

Notice that the boundaries $a(x)=\xi x$ and $b(x)=x$ of the Hardy--Steklov operator \eqref{u10} satisfy the conditions \eqref{3}. Besides,
the explicit form of the weight function $w(x)=x^{-\lambda q}$ is suitable for constructing the dual fairway $b^{-1}(y)\le\rho(y)\le a^{-1}(y)$
on $(0,\infty)$ with the property \eqref{10}. We find from \eqref{10} that
\begin{equation}\label{l2} \rho(y)=\zeta y:=\zeta(\xi) y:=\begin{cases}2^{-\frac{1}{1-\lambda q}}(1+\xi^{\lambda q-1})^{\frac{1}{1-\lambda q}}y, & \lambda q\not =1,\\ y /\sqrt{\xi}, & \lambda q=1.\end{cases}\end{equation} In order to characterize the inequality \eqref{u=1}
we need the intervals \begin{align*} \Delta_\xi(t)\!\!=&[a(t),b(t)]=[\xi t,t],& \delta_\xi(t)\!\!=&[b^{-1}(\rho^{-1}(t)),a^{-1}(\rho^{-1}(t))]
=\Bigl[\frac{t}{\zeta},\frac{t}{\xi\zeta}\Bigr],\\
\Theta_\xi(t)\!\!=&[b^{-1}(t),a^{-1}(t)]=\Bigl[t,\frac{t}{\xi}\Bigr],& \vartheta_\xi(t)\!\!=&[a(\rho(t)),b(\rho(t))]=[\xi\zeta t,\zeta t].\end{align*}
Using the result of Corollary 2.3 from Section 2 of this paper, we obtain that
\begin{equation*}
\|\mathcal{H}_\xi\|\approx\begin{cases}
\mathcal{A}(\xi)\approx\mathcal{A}^\ast(\xi), &1<p\le q<\infty,\\
\mathcal{B}(\xi)\approx\mathcal{B}_-^\ast(\xi) +\mathcal{B}_+^\ast(\xi), &0<q<p<\infty,\ p>1,
\end{cases}\quad\end{equation*} where  \begin{equation}\label{l1}
\mathcal{A}(\xi)=\begin{cases}
\Bigl(\frac{\xi^{\lambda q-1}-1}{(1-\lambda q) (1+\xi^{\lambda q-1})}\Bigr)^{\frac{1}{q}}
\sup_{t>0} t^{\frac{1}{q}-\lambda} \Bigl(\int_{\xi t}^t \varrho(y)\,\mathrm{d}y \Bigr)^{\frac{1}{p'}}, & \lambda q\not = 1,\\
[\ln 1/\xi]^{\frac{1}{q}}\sup_{t>0}\Bigl(\int_{\xi t}^t \varrho(y)\,\mathrm{d}y\Bigr)^{\frac{1}{p'}}, & \lambda q= 1; \end{cases}\end{equation}\begin{equation*}
\mathcal{A}^\ast(\xi)=\begin{cases} \Bigl(\frac{\xi^{\lambda q-1}-1}{1-\lambda q}\Bigr)^{\frac{1}{q}}\sup_{t>0} t^{\frac{1}{q}-\lambda}
\Bigl(\int_{\xi\zeta t}^{\zeta t} \varrho(y)\,\mathrm{d}y\Bigr)^{\frac{1}{p'}},& \lambda q\not= 1,\\ [\ln 1/\xi]^{\frac{1}{q}}\sup_{t>0}\Bigl(\int_{\xi\zeta t}^{\zeta t} \varrho(y)\,\mathrm{d}y\Bigr)^{\frac{1}{p'}}, & \lambda q= 1;\end{cases}\end{equation*}\begin{equation}\label{l3}
\mathcal{B}(\xi)=\begin{cases}
\Bigl(\frac{\xi^{\lambda q-1}-1}{(1-\lambda q) (1+\xi^{\lambda q-1})}\Bigr)^{\frac{1}{p}}
\Bigl(\int_0^\infty t^{\frac{r}{p}-\lambda r} \Bigl[\int_{\xi t}^t \varrho 
\Bigr]^{\frac{r}{p'}}
\mathrm{d}t\Bigr)^{\frac{1}{r}},& \lambda q\not = 1,\\
[\ln 1/\xi]^{\frac{1}{p}} \Bigl(\int_0^\infty t^{-1} \Bigl[\int_{\xi t}^t \varrho(y)\,\mathrm{d}y\Bigr]^{\frac{r}{p'}}
\mathrm{d}t\Bigr)^{\frac{1}{r}}, & \lambda q=1;\end{cases}\end{equation}\begin{equation*}
\mathcal{B}_-^\ast(\xi)=\begin{cases} \Bigl(\frac{\xi^{\lambda q-1}-1}{1-\lambda q}\Bigr)^{\frac{1}{q}}\Bigl(\int_0^\infty t^{\frac{r}{q}}
\Bigl[\int_{\xi\zeta t}^{
t} \varrho(y)\,\mathrm{d}y\Bigr]^{\frac{r}{q'}}\varrho(t)\,\mathrm{d}t\Bigr)^{\frac{1}{r}},& \lambda q\not = 1, \\ [\ln 1/\xi]^{\frac{1}{q}}
\Bigl(\int_0^\infty \Bigl[\int_{\xi\zeta t}^{
t} \varrho\Bigr]^{\frac{r}{q'}}\varrho(t)\,\mathrm{d}t\Bigr)^{\frac{1}{r}}, &\lambda q=1;\end{cases}\end{equation*}\begin{equation*}
\mathcal{B}_+^\ast(\xi)=\begin{cases} \Bigl(\frac{\xi^{\lambda q-1}-1}{1-\lambda q}\Bigr)^{\frac{1}{q}}\Bigl(\int_0^\infty t^{\frac{r}{q}}
\Bigl[\int_{t}^{\zeta t}  \varrho(y)\,\mathrm{d}y\Bigr]^{\frac{r}{q'}}\varrho(t)\,\mathrm{d}t\Bigr)^{\frac{1}{r}},& \lambda q\not = 1, \\ [\ln 1/\xi]^{\frac{1}{q}}
\Bigl(\int_0^\infty \Bigl[\int_{t}^{\zeta t}  \varrho(y)\,\mathrm{d}y\Bigr]^{\frac{r}{q'}}\varrho(t)\,\mathrm{d}t\Bigr)^{\frac{1}{r}}, &\lambda q=1,\end{cases}\end{equation*} (see functionals \eqref{A}---\eqref{B}). Thus and \eqref{main}, we extract characterizations for the inequality \eqref{00} with $u=1$: if $1<p\le q<\infty$ then \begin{equation}\label{m1}\underline{\mathbb{A}}^\ast \approx\underline{\mathbb{A}}\ll C\ll\bar{\mathbb{A}}\approx\bar{\mathbb{A}}^\ast,\quad\textrm{where}\end{equation} 
 \begin{align*}\label{m3}\underline{ \mathbb{A}}&:=\displaystyle\sup_{0<\xi<1}\mathcal{A}(\xi)
\biggl(\frac{1}{(1-\xi)^{\lambda q}}-1\biggr)^{\frac{1}{q}},&
\bar{ \mathbb{A}}&:=\displaystyle \biggl(\int_0^1 \frac{[\mathcal{A}(\xi)]^q\,
\mathrm{d}\xi}{(1-\xi)^{1+\lambda q}} \biggr)^{\frac{1}{q}},\\
\underline{ \mathbb{A}}^\ast&:=\displaystyle\sup_{0<\xi<1}\mathcal{A}^\ast(\xi)
\biggl(\frac{1}{(1-\xi)^{\lambda q}}-1\biggr)^{\frac{1}{q}},&
\bar{ \mathbb{A}}^\ast&:=\displaystyle \biggl(\int_0^1 \frac{[\mathcal{A}^\ast(\xi)]^q\,
\mathrm{d}\xi}{(1-\xi)^{1+\lambda q}} \biggr)^{\frac{1}{q}};
\end{align*}
for $0<q<p<\infty$ we have \begin{equation} \label{m2}\underline{\mathbb{B}}^\ast \approx\underline{\mathbb{B}}\ll C\ll\bar{\mathbb{B}}\approx\bar{\mathbb{B}}^\ast,\quad\textrm{where}\end{equation} 
\begin{gather*}
\underline{ \mathbb{B}}:=\displaystyle\sup_{0<\xi<1}\mathcal{B}(\xi)
\biggl(\frac{1}{(1-\xi)^{\lambda q}}-1\biggr)^{\frac{1}{q}},\qquad
\bar{ \mathbb{B}}:=\displaystyle \biggl(\int_0^1 \frac{[\mathcal{B}(\xi)]^q\,
\mathrm{d}\xi}{(1-\xi)^{1+\lambda q}} \biggr)^{\frac{1}{q}},\\
\underline{\mathbb{B}}^\ast:=\displaystyle\sup_{0<\xi<1}\Bigl[
(\mathcal{B}_-^\ast(\xi)+\mathcal{B}_+^\ast(\xi)\Bigr]
\biggl(\frac{1}{(1-\xi)^{\lambda q}}-1\biggr)^{\frac{1}{q}},\\
\bar{ \mathbb{B}}^\ast:=\displaystyle \biggl(\int_0^1 \frac{[\mathcal{B}_-^\ast(\xi)+\mathcal{B}_+^\ast(\xi)]^q\,
\mathrm{d}\xi}{(1-\xi)^{1+\lambda q}} \biggr)^{\frac{1}{q}}.\end{gather*}

Characterization of the initial inequality \eqref{00} in its equivalent form \eqref{00'} with general $u$ can be made in the exact same way as for the
one--weighted inequality \eqref{u=1}. Indeed, following the same pattern as the result of the estimates \eqref{first}---\eqref{third}
with general $U_\xi(x),$ and putting $W_\xi(x):=\int_0^\xi U_t(x)\,\mathrm{d}t$, we obtain that
\begin{equation}\label{mainmain}\sup_{0<\xi<1} \|\mathcal{H}_{\xi}\|_{L^p_v\to L^q_{W_\xi}}\ll C\ll \biggl(\int_0^1 \|\mathcal{H}_\xi\|^q_{L^p_v\to L^q_{U_\xi}}\,\mathrm{d}\xi\biggr)^{\frac{1}{q}}.\end{equation}
The estimate \eqref{mainmain} is true for the two--weighted inequality \eqref{00'} as well as for \eqref{00} with general weight function $u$ for all parameters $p>1$ and $q>0$.

To state the main result of this part of the paper we introduce three fairways. Let the first
fairway $\sigma_\xi(x)$ be such that $\xi x<\sigma_\xi(x)<x$, $0<\xi<1$, $x>0$, and
\begin{equation}\label{f1}
\int_{\xi x}^{\sigma_\xi(x)} v^{1-p'}=\int_{\sigma_\xi(x)}^x v^{1-p}\qquad(x>0).
\end{equation} Assume that, given $0<\xi<1$, two others (dual) fairway--functions $\rho_{W_\xi}(y)$ and $\rho_{U_\xi}(y)$ are such that $y<\rho_{W_\xi}(y)<y/\xi$, $y<\rho_{U_\xi}(y)<y/\xi$
for all $y>0$ and
\begin{equation}\label{f2}
\int_{y}^{\rho_{W_\xi}(y)} W_\xi=\int_{\rho_{W_\xi}(y)}^{y/\xi}W_\xi,\quad
\int_{y}^{\rho_{U_\xi}(y)} U_\xi=\int_{\rho_{U_\xi}(y)}^{y/\xi}U_\xi\qquad(y>0).
\end{equation}
\begin{theorem}
Let $p>1, q>0$ and $q\not=1$. Suppose that $C>0$ is the best possible in 
\eqref{00} and independent of $f$. Let $\rho_w$ denote either the fairway--function $\rho_{W_\xi}$ or $\rho_{U_\xi}$ defined by \eqref{f2}. 
Then we have for the case $1<p\le q<\infty$ that
$$\sup_{0<\xi<1}\mathcal{A}_i(W_\xi)=:\underline{\mathbb{A}}(i)\ll C\ll\bar{\mathbb{A}}(i):=\biggl(\int_0^1[\mathcal{A}_i(U_\xi)]^q
\,\mathrm{d}\xi\biggr)^{\frac{1}{q}}\qquad(i=1,2,3,4), $$ where \begin{equation*}\mathcal{A}_1(w):=\sup_{t>0} \biggl(\int_{\sigma_\xi(t)}^{\sigma_\xi(t)/\xi}w(x) \,\mathrm{d}x\biggr)^{\frac{1}{q}}
\biggl(\int_{\xi t}^{t}\varrho(y)\,\mathrm{d}y\biggr)^{\frac{1}{p'}},\end{equation*}
\begin{equation*}\mathcal{A}_2(w):=\sup_{t>0}\biggl(\int_{t}^{t/\xi}w(x) \,\mathrm{d}x\biggr)^{\frac{1}{q}}
\biggl(\int_{\xi\sigma_\xi^{-1} (t)}^{\sigma_\xi^{-1} (t)}\varrho(y)\,\mathrm{d}y\biggr)^{\frac{1}{p'}},\end{equation*}
\begin{equation*}\mathcal{A}_3(w):=\sup_{t>0} \biggl(\int_{\rho_{w}^{-1}(t)}^{\rho_{w}^{-1}(t)/\xi}w(x) \,\mathrm{d}x\biggr)^{\frac{1}{q}}
\biggl(\int_{\xi t}^{t}\varrho(y)\,\mathrm{d}y\biggr)^{\frac{1}{p'}},\end{equation*}
\begin{equation*}\mathcal{A}_4(w):=\sup_{t>0}\biggl(\int_{t}^{t/\xi}w(x) \,\mathrm{d}x\biggr)^{\frac{1}{q}}
\biggl(\int_{\xi\rho_{w}(t)}^{\rho_{w}(t)} \varrho(y)\,\mathrm{d}y\biggr)^{\frac{1}{p'}}.\end{equation*} Moreover, $\underline{\mathbb{A}}(1)\approx\underline{\mathbb{A}}(2)\approx\underline{\mathbb{A}}(3) \approx\underline{\mathbb{A}}(4)$ and $\bar{\mathbb{A}}(1)\approx\bar{\mathbb{A}}(2)\approx\bar{\mathbb{A}}(3) \approx\bar{\mathbb{A}}(4)$.\\ If $0<q<p<\infty$ and $p>1$ then
$$\sup_{0<\xi<1}\mathcal{B}_i(W_\xi)=:\underline{\mathbb{B}}(i)\ll C\ll\bar{\mathbb{B}}(i):=\biggl(\int_0^1[\mathcal{B}_i(U_\xi)]^q
\,\mathrm{d}\xi\biggr)^{\frac{1}{q}}\qquad(i=1,2,3), $$ where
\begin{align*}\mathcal{B}_1(w)&:=\biggl(\int_0^\infty \biggl[\int_{\sigma_\xi(t)}^{\sigma_\xi(t)/\xi}w(x) \,\mathrm{d}x\biggr]^{\frac{r}{p}}
\biggl[\int_{\xi t}^{t}\varrho(y)\,\mathrm{d}y\biggr]^{\frac{r}{p'}} w(t)\,\mathrm{d}t\biggr)^{\frac{1}{r}},\\
\mathcal{B}_2(w)&:=\biggl(\int_0^\infty \biggl[\int_{t}^{t/\xi}w(x) \,\mathrm{d}x\biggr]^{\frac{r}{q}}
\biggl[\int_{\xi\sigma_\xi^{-1} (t)}^{\sigma_\xi^{-1} (t)}\varrho(y)\,\mathrm{d}y\biggr]^{\frac{r}{q'}}\varrho(t)\, \mathrm{d}t\biggr)^{\frac{1}{r}},\\
\mathcal{B}_3(w)&:=\biggl(\int_0^\infty \biggl[\int_{\rho_{w}^{-1}(t)}^{\rho_{w}^{-1}(t)/\xi}w(x) \,\mathrm{d}x\biggr]^{\frac{r}{p}}
\biggl[\int_{\xi t}^{t}\varrho(y)\,\mathrm{d}y\biggr]^{\frac{r}{p'}} w(t)\, \mathrm{d}t\biggr)^{\frac{1}{r}},\end{align*} and \begin{multline*}\underline{\mathbb{B}}(4):=\sup_{0<\xi<1}\Bigl[ \mathcal{B}_4^-(W_\xi)+\mathcal{B}_4^+(W_\xi)^+\Bigr]\ll C\\\ll \bar{\mathbb{B}}(4):= \biggl(\int_0^\infty\Bigl[\mathcal{B}_4^-(U_\xi)+\mathcal{B}_4^+(U_\xi)^+\Bigr]^q\, \mathrm{d}\xi\biggr)^{\frac{1}{q}},\end{multline*} where
\begin{align*}\mathcal{B}_4^-(w)&:=\biggl(\int_0^\infty
\biggl[\int_{t}^{t/\xi}w(x) \,\mathrm{d}x\biggr]^{\frac{r}{q}}
\biggl[\int_{\xi\rho_{w}(t)}^{t} \varrho(y)\,\mathrm{d}y\biggr]^{\frac{r}{q'}}\varrho(t)\, \mathrm{d}t\biggr)^{\frac{1}{r}},\\\mathcal{B}_4^+(w)&:=\biggl(\int_0^\infty
\biggl[\int_{t}^{t/\xi}w(x) \,\mathrm{d}x\biggr]^{\frac{r}{q}}
\biggl[\int_t^{\rho_{w}(t)}\varrho(y)\,\mathrm{d}y\biggr]^{\frac{r}{q'}}\varrho(t)\, \mathrm{d}t\biggr)^{\frac{1}{r}}.\end{align*} Moreover, $\underline{\mathbb{B}}(1)\approx\underline{\mathbb{B}}(2)\approx\underline{\mathbb{B}}(3) \approx\underline{\mathbb{B}}(4)$ and $\bar{\mathbb{B}}(1)\approx\bar{\mathbb{B}}(2)\approx\bar{\mathbb{B}}(3) \approx\bar{\mathbb{B}}(4)$.
\end{theorem}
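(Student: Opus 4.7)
The plan is to deduce Theorem 3.1 from the two--sided bound \eqref{mainmain} combined with Corollary 2.3 applied to the Hardy--Steklov operator $\mathcal{H}_\xi$ with boundaries $a(x)=\xi x$, $b(x)=x$. First I would fix $\xi\in(0,1)$ and a weight $w$ on $(0,\infty)$ (with $w=W_\xi$ for the lower estimate and $w=U_\xi$ for the upper estimate) and apply Corollary 2.3 to the operator $\mathcal{H}_\xi:L_v^p\to L_w^q$. For this choice of boundaries one has $a^{-1}(y)=y/\xi$, $b^{-1}(y)=y$, so the defining equation \eqref{10v} for the fairway reduces to \eqref{f1} for $\sigma_\xi$, while \eqref{10} for the dual fairway reduces to \eqref{f2} for $\rho_w$. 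Hence the fairways appearing in Theorem 3.1 are precisely the Section~2 objects in this specialization.

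Next I would identify each $\mathcal{A}_i(w)$ with one of the four equivalent functionals in Corollary 2.3. With $b^{-1}(y)=y$ and $a^{-1}(y)=y/\xi$ the intervals of Section~2 become $\delta(t)=[\sigma_\xi(t),\sigma_\xi(t)/\xi)$ and $\Delta(t)=[\xi t,t)$ when $\varsigma=\sigma_\xi$, $\Theta(t)=[t,t/\xi)$ and $\vartheta(t)=[\xi\sigma_\xi^{-1}(t),\sigma_\xi^{-1}(t))$ when $\varsigma^\ast=\sigma_\xi^{-1}$, and analogously with $\sigma_\xi$ replaced by $\rho_w^{-1}$ or $\rho_w$. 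A direct comparison of integration limits then yields $\mathcal{A}_1(w)=\mathcal{A}_{\sigma_\xi}$, $\mathcal{A}_2(w)=(\mathcal{A}_{\sigma_\xi^{-1}})^\ast$, $\mathcal{A}_3(w)=\mathcal{A}_{\rho_w^{-1}}$ and $\mathcal{A}_4(w)=(\mathcal{A}_{\rho_w})^\ast$, and an analogous matching of $\mathcal{B}_i(w)$ with the $\mathcal{B}$--functionals of Corollary 2.3, including the splitting $\mathcal{B}_4^-(w)+\mathcal{B}_4^+(w)\leftrightarrow(\mathcal{B}_{\rho_w}^-)^\ast+(\mathcal{B}_{\rho_w}^+)^\ast$.

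Corollary 2.3 then yields $\|\mathcal{H}_\xi\|_{L_v^p\to L_w^q}\approx\mathcal{A}_i(w)$ for $i=1,2,3,4$ when $1<p\le q<\infty$, and $\|\mathcal{H}_\xi\|_{L_v^p\to L_w^q}\approx\mathcal{B}_i(w)$ for $i=1,2,3$, respectively $\approx\mathcal{B}_4^-(w)+\mathcal{B}_4^+(w)$, when $0<q<p<\infty$ and $p>1$, with constants independent of $\xi$ and $w$. Substituting these equivalences into the lower bound in \eqref{mainmain} (with $w=W_\xi$) and taking the supremum over $\xi\in(0,1)$ gives $\underline{\mathbb{A}}(i)\ll C$ and $\underline{\mathbb{B}}(i)\ll C$; substituting them into the upper bound (with $w=U_\xi$) and integrating in $\xi$ yields $C\ll\bar{\mathbb{A}}(i)$ and $C\ll\bar{\mathbb{B}}(i)$. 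The pairwise equivalences $\underline{\mathbb{A}}(1)\approx\cdots\approx\underline{\mathbb{A}}(4)$ and $\bar{\mathbb{A}}(1)\approx\cdots\approx\bar{\mathbb{A}}(4)$ (and similarly for $\mathbb{B}$) propagate from the pointwise--in--$\xi$ equivalences supplied by Corollary 2.3.

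I expect the most delicate step to be the bookkeeping in the case $0<q<1<p<\infty$, where $(\mathcal{B}_{\rho_w}^-)^\ast$ and $(\mathcal{B}_{\rho_w}^+)^\ast$ do not collapse to $(\mathcal{B}_{\rho_w})^\ast$; one has to carry the sum $\mathcal{B}_4^-(w)+\mathcal{B}_4^+(w)$ throughout, both inside the supremum defining $\underline{\mathbb{B}}(4)$ and inside the $L^q(\mathrm{d}\xi)$ norm defining $\bar{\mathbb{B}}(4)$, and verify that the constants in the $\xi$--wise equivalences stemming from Theorem 2.2 (c) and (e) are genuinely uniform in $\xi$ so as to survive the outer supremum and outer integral. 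Everything else reduces to the routine identification of integration intervals once $a$, $b$, $\sigma_\xi$ and $\rho_w$ are in place.
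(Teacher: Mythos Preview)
Your proposal is correct and follows exactly the paper's own approach: the paper's proof consists of the single line ``Theorem follows by \eqref{mainmain} and Corollary 2.3,'' and what you have written is precisely that argument spelled out in detail, including the identification of $\mathcal{A}_i(w)$ and $\mathcal{B}_i(w)$ with the corresponding Section~2 functionals for $a(x)=\xi x$, $b(x)=x$. Your observation that the implicit constants in Corollary 2.3 depend only on $p,q$ (hence are uniform in $\xi$ and $w$) is exactly what makes the outer supremum and outer $L^q(\mathrm{d}\xi)$ integration legitimate.
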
 \begin{proof} Theorem follows by \eqref{mainmain} and Corollary 2.3.\end{proof}

To characterize the inequality \eqref{00} for $p=1$ or $q=1$ one can use either Theorem 4 from \cite[Ch. 7, \S~1.5]{KA} or Theorem 3.1 from this paper with $p\to 1$ or $q\to 1$. In particular, on the strength
of Corollary 2.3,
$$\lim_{p\to 1}\mathcal{A}_{\rho_w^{-1}}=\sup_{t>0} \biggl(\int_{\rho_w^{-1}(t)}^{\rho_w^{-1}(t)/\xi}w(x)\,\mathrm{d}x\biggr)^{\frac{1}{q}}
\esup_{\xi t<y<t}v^{-1}(y)=:A(w),$$
$$\lim_{p\to 1}(\mathcal{A}_{\rho_w})^\ast=\sup_{t>0} \biggl(\int_{t}^{t/\xi}w(x)\,\mathrm{d}x\biggr)^{\frac{1}{q}} 
\esup_{\xi \rho_w(t)<y<\rho_w(t)}v^{-1}(y)=:A^\ast(w),$$
$$\lim_{p\to 1}\mathcal{B}_{\rho_w^{-1}}=\biggl(\int_0^\infty \biggl[\int_{\rho_w^{-1}(t)}^{\rho_w^{-1}(t)/\xi}w
\biggr]^ {\frac{1}{1-q}}[
\esup_{\xi t<y<t}v^{-1}(y)]^{\frac{q}{1-q}}\,w(t)\,\mathrm{d}t\biggr) ^{\frac{1}{q}-1}=:B(w).$$ Thus,
$$ \sup_{0<\xi<1}
A(W_\xi)\approx\!\sup_{0<\xi<1}\!
A^\ast(W_\xi)\ll C\!\ll\!\biggl(\int_0^1\!
[A(U_\xi)]^q\,\mathrm{d}\xi\biggr)^{\frac{1}{q}}\!\!\approx\!\biggl(\int_0^1\!
[A^\ast(U_\xi)]^q\,\mathrm{d}\xi\biggr)^{\frac{1}{q}}$$ if $p=1\le q<\infty$, and
$$\sup_{0<\xi<1}
B^\ast(W_\xi)\ll C\ll\biggl(\int_0^1
[B(U_\xi)]^q\,\mathrm{d}\xi\biggr)^{\frac{1}{q}}\qquad(0<q<p=1).$$ Analogously, one can characterize \eqref{00} if $q=1$, using even the both types of the functionals
$\mathcal{A}_\varsigma$, $(\mathcal{A}_{\varsigma^\ast})^\ast$, $\mathcal{B}_\varsigma$, $(\mathcal{B}_{\varsigma^\ast})^\ast$
(with $\varsigma=\sigma$ or $\varsigma=\rho^{-1}$) when $p'\not=\infty$. If $p'=\infty$ then only the functionals
$\mathcal{A}_{\rho^{-1}}$, $(\mathcal{A}_{\rho})^\ast$ and $\mathcal{B}_{\rho^{-1}}$ can be used. Notice also that if $q=1$ then $r/q'=0$, and,
therefore,
$$\lim_{q\to 1}(\mathcal{B}_{\sigma_\xi^{-1}})^\ast=\lim_{q\to 1}(\mathcal{B}_{\rho_w})^\ast=
\biggl(\int_0^\infty\biggl[\int_{b^{-1}(t)}^{a^{-1}(t)}w
\biggr]^{p'}
\varrho(t)\,\mathrm{d}t\biggr)^{\frac{1}{p'}},\qquad 1<p<\infty.$$

We complete the section by two examples giving criteria for \eqref{00} to hold with particular $u$ and $v$.
\begin{example} Let $1<p\le q<\infty$, $u=1$ and $v(z)=z^\alpha$. 
In order to derive necessary and sufficient conditions for the inequality \eqref{00} to hold under these assumptions we shall start from calculating the functional $\mathcal{A}(\xi)$ of the form \eqref{l1}: \begin{equation*}
\mathcal{A}(\xi)=\begin{cases}
\Bigl(\frac{\xi^{\lambda q-1}-1}{(1-\lambda q) (1+\xi^{\lambda q-1})}\Bigr)^{\frac{1}{q}}
\sup_{t>0} t^{\frac{1}{q}-\lambda} \Bigl(\int_{\xi t}^t z^{\alpha(1-p')}\,\mathrm{d}z\Bigr)^{\frac{1}{p'}}, & \lambda q\not = 1,\\
[\ln 1/\xi]^{\frac{1}{q}}\sup_{t>0}\Bigl(\int_{\xi t}^t z^{\alpha(1-p')}\,\mathrm{d}z \Bigr)^{\frac{1}{p'}}, & \lambda q= 1. \end{cases}
\end{equation*} The suprema in $\mathcal{A}(\xi)$ are finite if and only if \begin{equation}\label{m4}
\frac{1}{q}-\lambda+\left(\alpha(1-p')+1\right)\frac{1}{p'}=0. \end{equation} In such a case $\alpha(1-p')+1=p'(\lambda-1/q)$ and, therefore,
\begin{equation*}
\mathcal{A}(\xi)=\begin{cases}
\Bigl(\frac{\xi^{\lambda q-1}-1}{(1-\lambda q) (1+\xi^{\lambda q-1})}\Bigr)^{\frac{1}{q}}
\Bigl(\frac{1-\xi^{p'(\lambda-1/q)}}{p'(\lambda-1/q)}\Bigr)^{\frac{1}{p'}}, & \lambda q\not = 1,\\
[\ln 1/\xi]^{\frac{1}{q}+\frac{1}{p'}}, & \lambda q= 1. \end{cases}
\end{equation*}
It follows from \eqref{m1} that the inequality \eqref{00} is satisfied if $\bar{ \mathbb{A}}<\infty$, where \begin{equation*}\bar{ \mathbb{A}}=\begin{cases}
\Bigl(\int_0^1\frac{\xi^{\lambda q-1}-1}{(1-\lambda q) (1+\xi^{\lambda q-1})}
\Bigl(\frac{1-\xi^{p'(\lambda-1/q)}}{p'(\lambda-1/q)}\Bigr)^{\frac{q}{p'}}\frac {\mathrm{d}\xi}{(1-\xi)^{1+\lambda q}} \Bigr)^{\frac{1}{q}}, & \lambda q\not =1,\\
\Bigl(\int_0^1[\ln 1/\xi]^{1+\frac{q}{p'}}\frac {\mathrm{d}\xi}{(1-\xi)^{2}} \Bigr)^{\frac{1}{q}}, & \lambda q=1;\end{cases}\end{equation*}
and if \eqref{00} holds with $C<\infty$ then $\underline{\mathbb{A}}<\infty$, where
\begin{equation*}\underline{\mathbb{A}}=\begin{cases}
\sup\limits_{0<\xi<1}\Bigl(\frac{\xi^{\lambda q-1}-1}{(1-\lambda q) (1+\xi^{\lambda q-1})}\Bigr)^{\frac{1}{q}}
\Bigl(\frac{1-\xi^{p'(\lambda-1/q)}}{p'(\lambda-1/q)}\Bigr)^{\frac{1}{p'}}
\Bigl(\frac{1}{(1-\xi)^{\lambda q}}-1\Bigr)^{\frac{1}{q}}, & \lambda q\not =1,\\  \sup\limits_{0<\xi<1} [\ln 1/\xi]^{\frac{1}{q}+\frac{1}{p'}}\,(1-\xi)^{-\lambda}\,\xi^{\frac{1}{q}} , & \lambda q= 1.\end{cases}\end{equation*}
We write
 \begin{equation*}\bar{ \mathbb{A}}\simeq\begin{cases}
\Bigl(\int_0^1\frac{1-\xi^{\lambda q-1}}{1+\xi^{\lambda q-1}}
\, [1-\xi^{p'(\lambda-1/q)}]^{\frac{q}{p'}}\frac {\mathrm{d}\xi}{(1-\xi)^{1+\lambda q}} \Bigr)^{\frac{1}{q}}, & \lambda q>1,\\
\Bigl(\int_0^1\frac{1-\xi^{1-\lambda q}}{\xi^{1-\lambda q}+1}\,
[1-\xi^{p'(1/q-\lambda)}]^{\frac{q}{p'}}\,\frac {\xi^{\lambda q-1}\,\mathrm{d}\xi}{(1-\xi)^{1+\lambda q}} \Bigr)^{\frac{1}{q}}, & \lambda q<1,\\
\Bigl(\int_0^1[\ln 1/\xi]^{1+\frac{q}{p'}}\frac {\mathrm{d}\xi}{(1-\xi)^{2}} \Bigr)^{\frac{1}{q}}, & \lambda q=1;\end{cases}\end{equation*} and
\begin{equation*}\underline{\mathbb{A}}\simeq\begin{cases}
\sup\limits_{0<\xi<1}\Bigl(\frac{1-\xi^{\lambda q-1}}{1+\xi^{\lambda q-1}}\Bigr)^{\frac{1}{q}}
[1-\xi^{p'(\lambda-1/q)}]^{\frac{1}{p'}}(1-\xi)^{-\lambda}
[1-(1-\xi)^{\lambda q}]^{\frac{1}{q}} & \lambda q>1,\\
\sup\limits_{0<\xi<1}\Bigl(\frac{1-\xi^{1-\lambda q}}{\xi^{1-\lambda q}+1}\Bigr)^{\frac{1}{q}}
[1-\xi^{p'(1/q-\lambda)}]^{\frac{1}{p'}}(1-\xi)^{-\lambda}\xi^{\lambda-1/q}
[1-(1-\xi)^{\lambda q}]^{\frac{1}{q}}, & \lambda q<1,\\  \sup\limits_{0<\xi<1} [\ln 1/\xi]^{\frac{1}{q}+\frac{1}{p'}}\,(1-\xi)^{-\lambda}\,\xi^{\frac{1}{q}} , & \lambda q= 1.\end{cases}\end{equation*}
Notice that for some $\beta>0$ \begin{equation}\label{eqv}
1-\xi^\beta\approx \xi^{\beta -1}(1-\xi)\quad\textrm{if}\ \ 1/2\le\xi\le 1
\end{equation} and \begin{equation}\label{eqvi}
1-(1-\xi)^\beta\approx (1-\xi)^{\beta -1}\xi\quad\textrm{if}\ \ 0\le\xi\le 1/2.
\end{equation} If $\lambda>1/q$ then the integral in $\bar{ \mathbb{A}}$ converges at the point $0$, and, by \eqref{eqv}, converges at $1$ if $q/p'-\lambda q>-1$. Therefore, $\bar{ \mathbb{A}}<\infty$ in the case $\lambda>1/q$ if \begin{equation}\label{usl}\lambda<\frac{1}{p'}+\frac{1}{q}.\end{equation} This condition 
is automatically satisfied and is also sufficient for finiteness of $\bar{ \mathbb{A}}$ if $\lambda<1/q$. If $\lambda=1/q$ then, by substitution $\xi=\mathrm{e}^{-t}$, the functional $\bar{ \mathbb{A}}$ takes the form $$\bar{ \mathbb{A}}=\biggl(\int_0^\infty\frac{t^{1+q/p'}\mathrm{e}^{-t}\mathrm{d}t} {(1-\mathrm{e}^{-t})^{2}} \biggr)^{\frac{1}{q}}.$$ This integral converges at $\infty$. Since \begin{equation}\label{l6} \lim_{t\to 0}t/(1-\mathrm{e}^{-t})=1\end{equation} the integrand behaves like $t^{q/p'-1}$ near $0$, so $\bar{ \mathbb{A}}$ is finite. Thus, the condition \eqref{usl} is sufficient for the inequality \eqref{00} to hold when $u=1$ and $v(z)=z^\alpha$ with $$ \alpha:=\left(\frac{1}{q}-\lambda+1\right)p-1>0.$$

If $\lambda>1/q$ then the supremum in $\underline{ \mathbb{A}}$ is finite near $0$, and, by \eqref{eqv}, is finite near $1$ if $1/q+1/p'-\lambda\ge 0$. Therefore, if \eqref{00} is true for $\lambda>1/q$ with $C<\infty$ then \begin{equation}\label{uslo}\lambda\le\frac{1}{p'}+\frac{1}{q}.\end{equation} In view of \eqref{eqvi}, the supremum in $\underline{ \mathbb{A}}$ is finite near $0$ in the case $\lambda<1/q$. By \eqref{eqv} the condition \eqref{uslo} necessarily follows from finiteness of the supremum in $\underline{ \mathbb{A}}$ near $1$, and is automatically satisfied if $\lambda<1/q$. If $\lambda=1/q$ then, substituting $\xi=\mathrm{e}^{-t}$, we arrive to 
$$\underline{ \mathbb{A}}=\sup_{t>0}\frac{t^{\frac{1}{q}+\frac{1}{p'}}\mathrm{e}^{-\frac{t}{q}}} {(1-\mathrm{e}^{-t})^{\lambda}}.$$ This supremum is finite near $\infty$. In view of \eqref{l6} and $\lambda=1/q$ it is finite near $0$ as well.

Thus, the condition \eqref{uslo} is necessary for \eqref{00} when $u=1$ and $v(z)=z^\alpha$, $\alpha\ge 0.$ Combining this with \eqref{usl} we obtain the following criterion. \begin{proposition} Let $1<p\le q<\infty$, $0<\lambda<1$ and put $ \alpha=(1/q-\lambda+1)p-1.$ Assume that $u=1$ and $v(z)=z^\alpha$ in the inequality \eqref{00}. Then \eqref{00} holds if and only if $\lambda<1/p'+1/q$. \end{proposition}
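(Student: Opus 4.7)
The plan is to specialize the two-sided bound \eqref{m1} to $v(z)=z^\alpha$ and carry out the explicit integration and supremum analysis of the resulting $\mathcal{A}(\xi)$ from \eqref{l1}.

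First I would substitute $v(z)=z^\alpha$ into $\mathcal{A}(\xi)$; the inner integral $\int_{\xi t}^t z^{\alpha(1-p')}\,dz$ equals a constant multiple of $t^{\alpha(1-p')+1}$ (with the usual log modification at the pole), so $\mathcal{A}(\xi)$ carries an overall $t$-factor of $t^{1/q-\lambda+(\alpha(1-p')+1)/p'}$. For the supremum in $t>0$ to be finite and nonzero this exponent must vanish, which is exactly \eqref{m4} and, after elementary rearrangement, is equivalent to the stipulated identity $\alpha=(1/q-\lambda+1)p-1$. If the relation failed then $\mathcal{A}(\xi)=\infty$ for every $\xi$, forcing $\underline{\mathbb{A}}=\infty$ and hence $C=\infty$ by the lower bound in \eqref{m1}.

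Under the balance relation, $\mathcal{A}(\xi)$ reduces to a purely $\xi$-dependent closed form (with the case split $\lambda q\neq 1$ vs $\lambda q=1$ already singled out in \eqref{l1}). For the sufficient direction I would check integrability of $[\mathcal{A}(\xi)]^q(1-\xi)^{-1-\lambda q}$ on $(0,1)$: the endpoint $\xi\to 0^+$ is routine after splitting by whether $\lambda q>1$, $<1$ or $=1$; the endpoint $\xi\to 1^-$ is handled through the elementary equivalences \eqref{eqv}--\eqref{eqvi}, which reduce the integrand to something comparable with $(1-\xi)^{q/p'-\lambda q-1}$ near $1$ and hence integrable exactly when $\lambda<1/p'+1/q$. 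The critical subcase $\lambda q=1$ is dealt with by the substitution $\xi=e^{-t}$ and the limit \eqref{l6}. This proves $\lambda<1/p'+1/q\Rightarrow\bar{\mathbb{A}}<\infty\Rightarrow C<\infty$.

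For the necessary direction I would bound below the supremum $\underline{\mathbb{A}}=\sup_\xi\mathcal{A}(\xi)\bigl((1-\xi)^{-\lambda q}-1\bigr)^{1/q}$. Near $\xi=1$ the added factor is $\approx(1-\xi)^{-\lambda}$ while the dominant $\xi$-factor in $\mathcal{A}(\xi)$, namely $(1-\xi^{p'(\lambda-1/q)})^{1/p'}$, is $\approx(1-\xi)^{1/p'}$. Boundedness of the product therefore forces $1/p'-\lambda\ge 0$ up to the balance correction, i.e.\ $\lambda\le 1/p'+1/q$; contrapositively, $\lambda>1/p'+1/q\Rightarrow\underline{\mathbb{A}}=\infty\Rightarrow C=\infty$.

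The main obstacle I anticipate is the critical boundary $\lambda=1/p'+1/q$, at which $\underline{\mathbb{A}}<\infty$ but $\bar{\mathbb{A}}=\infty$, so \eqref{m1} alone is inconclusive. At this threshold $\alpha$ collapses to $0$ and the inequality \eqref{00} becomes dilation-invariant. To rule out a finite $C$ in this scale-invariant regime I would build a test-function family by superposing scaled Hardy--Steklov extremizers of the kind used in the proof of Theorem \ref{t1}(e) along a geometric sequence $\xi_n\to 1^-$ and show, via the logarithmic divergence of $\int_0^1(1-\xi)^{-1}\,d\xi$ (which is precisely what drives $\bar{\mathbb{A}}=\infty$ at the threshold), that the ratio of the two sides of \eqref{00} is unbounded. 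Assembling the three conclusive cases yields the stated \emph{iff} criterion.
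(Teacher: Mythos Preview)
Your approach is essentially identical to the paper's: you specialize $\mathcal{A}(\xi)$ from \eqref{l1} to $v(z)=z^\alpha$, obtain the balance relation \eqref{m4}, reduce $\mathcal{A}(\xi)$ to a closed form in $\xi$, and then analyze $\bar{\mathbb{A}}$ and $\underline{\mathbb{A}}$ via the same case split $\lambda q\gtrless 1$, the asymptotic equivalences \eqref{eqv}--\eqref{eqvi}, and the substitution $\xi=e^{-t}$ with \eqref{l6}. This reproduces the paper's sufficient condition $\lambda<1/p'+1/q$ (from $\bar{\mathbb{A}}<\infty$) and necessary condition $\lambda\le 1/p'+1/q$ (from $\underline{\mathbb{A}}<\infty$).

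You are right that the endpoint $\lambda=1/p'+1/q$ (equivalently $\alpha=0$) is not settled by \eqref{m1} alone, and you correctly observe the scale invariance there. It is worth noting, however, that the paper itself does \emph{not} close this gap: after deriving \eqref{usl} as sufficient and \eqref{uslo} as necessary, it simply writes ``Combining this with \eqref{usl} we obtain the following criterion'' and states the strict inequality as the \emph{iff} condition. So your proposed test-function construction at the threshold is additional work that goes beyond what the paper actually provides; the paper offers no argument for the failure of \eqref{00} at $\lambda=1/p'+1/q$. Your sketch (superposing dilated extremizers along $\xi_n\to 1^-$ to exploit the logarithmic divergence) is a plausible route, but as written it is only an outline, and you should be aware that the paper gives no guidance here against which to check it.
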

\begin{corollary}
In particular, if $p=q$, then the inequality
$$\int_0^\infty\!\!\!\!\int_0^\infty\frac{|f(x)-f(y)|^p}{|x-y|^{1+\lambda
p}}\,\mathrm{d}x\,\mathrm{d}y\le C\int_0^\infty |f'(z)|^p z^{(1-\lambda)p}\,\mathrm{d}z$$ is valid if and only if $0<\lambda<1$.
\end{corollary}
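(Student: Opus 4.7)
The plan is to obtain the corollary as an immediate specialization of Proposition 3.3 to the diagonal case $p=q$. First I would verify that the weight exponent matches. In Proposition 3.3 the weight is $v(z)=z^{\alpha}$ with $\alpha=(1/q-\lambda+1)p-1$. Setting $q=p$ gives
\[
\alpha=\Bigl(\tfrac{1}{p}-\lambda+1\Bigr)p-1=p-\lambda p=(1-\lambda)p,
\]
so that $z^{\alpha}=z^{(1-\lambda)p}$, matching the right-hand side of the inequality in the corollary. Note also that $\alpha>0$ under the standing hypothesis $0<\lambda<1$, which is consistent with the positivity requirement of Proposition 3.3.

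Next I would simplify the threshold $\lambda<1/p'+1/q$ given by Proposition 3.3. For $q=p$ this reads
\[
\lambda<\tfrac{1}{p'}+\tfrac{1}{p}=\Bigl(1-\tfrac{1}{p}\Bigr)+\tfrac{1}{p}=1,
\]
so the condition collapses to $\lambda<1$. Combined with the standing assumption $\lambda>0$ of the paper, this shows that the inequality holds if and only if $0<\lambda<1$, which is exactly the statement to be proved.

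There is essentially no obstacle, since the corollary is a pure specialization. The only minor point requiring attention is the sub-case $\lambda p=1$ (the $\lambda q=1$ branch in the formulas \eqref{l1}--\eqref{l3} for $\mathcal{A}(\xi)$, $\mathcal{B}(\xi)$ and their starred analogues), which splits off from the generic $\lambda q\neq 1$ case in the proof of Proposition 3.3. However, that proof already treats this branch via the substitution $\xi=\mathrm{e}^{-t}$ and the limit \eqref{l6}, showing that $\bar{\mathbb{A}}$ is finite and $\underline{\mathbb{A}}$ is finite precisely when $\lambda<1$, so no additional argument is needed here. The proof therefore reduces to invoking Proposition 3.3 with $q=p$ and quoting the two computations above.
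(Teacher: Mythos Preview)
Your proposal is correct and matches the paper's approach: the corollary is stated there without proof as an immediate specialization of Proposition~3.3, and your two computations (that $\alpha=(1-\lambda)p$ and that $1/p'+1/q=1$ when $p=q$) are exactly what is needed.
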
\begin{remark} The suffucient part of Corollary 2.4 was proven in \cite[pp. 175--176]{HS}.\end{remark} 
\end{example}

\begin{example} Let $0<q<p<\infty$, $u=1$ and $v(z)=z^{\frac{\gamma-1}{1-p'}}(1+z^\gamma)^{\frac{2}{p'-1}}$, $\gamma\not =0$.  We shall find boundaries on $\lambda(\gamma)$ following from characteristics on \eqref{00} for this case. We shall use the functional $\mathcal{B}(\xi)$ of the form \eqref{l3}.  Since 
$$\int_{\xi t}^t v^{1-p'}(z)\,\mathrm{d}z=-
\frac{1}{\gamma(1+z^\gamma)}\biggl|_{\xi t}^t=\frac{t^{|\gamma|}(1-\xi^{|\gamma|})}
{{|\gamma|}[1+(\xi t)^{{|\gamma|}}](1+t^{|\gamma|})}$$ for any $\gamma\not= 0$ then we may and shall assume that $\gamma>0$. We have
$$\mathcal{B}(\xi)\simeq\begin{cases}
\Bigl(\frac{(\xi^{\lambda q-1}-1)(1-\xi^\gamma)^{p-1}}{(1-\lambda q) (1+\xi^{\lambda q-1})}\Bigr)^{\frac{1}{p}}
\Bigl(\int_0^\infty \frac{t^{r/p-\lambda r+\gamma r/p'}\,\mathrm{d}t}{[1+(\xi t)^{\gamma}]^{r/p'}[1+t^\gamma]^{r/p'}}
\Bigr)^{\frac{1}{r}},& \lambda q\not = 1,\\
[\ln 1/\xi]^{\frac{1}{p}}(1-\xi^\gamma)^{\frac{1}{p'}} \Bigl(\int_0^\infty \frac{t^{\gamma r/p'-1}\,\mathrm{d} t}{[1+(\xi t)^{\gamma}]^{r/p'}[1+t^\gamma]^{r/p'}}
\Bigr)^{\frac{1}{r}}, & \lambda q=1.\end{cases}$$ Put $s:=r(1/q-\lambda)+\gamma r/p'$ and assume that $\mathcal{B}(\xi)<\infty$. Then the integral $$I(\xi):=\int_0^\infty \frac{t^{s-1}\,\mathrm{d}t}{[1+(\xi t)^{\gamma}]^{r/p'}[1+t^\gamma]^{r/p'}}$$ in $\mathcal{B}(\xi)$ converges at $0$. Therefore, $s>0$, that is our $\lambda$ has to satisfy \begin{equation} \label{l4}\lambda< \frac{1}{q}+\frac{\gamma}{p'}
.\end{equation} The condition \eqref{l4} also guarantees convergence $I(\xi)$ at $\infty$ in the case $\lambda q>1$. Indeed, since $\frac{1}{(1+cz)(1+z)}=\frac{1}{1-c}\Bigl[\frac{1}{1+z}-\frac{c}{1+cz}\Bigr]$, then
\begin{multline}I(\xi)\approx\int_0^\infty \frac{t^{s-1}\,\mathrm{d}t}{[1+\xi^{\frac{\gamma r}{p'}}t^{\frac{\gamma r}{p'}}][1+t^{\frac{\gamma r}{p'}}]}=\frac{1/s}{1-\xi^{\frac{\gamma r}{p'}}}\int_0^\infty\biggl[\frac{\mathrm{d}t^s}{1+t^{\frac{\gamma r}{p'}}}- \frac{\xi^{\frac{\gamma r}{p'}}\mathrm{d}t^s}{1+\xi^{\frac{\gamma r}{p'}}t^{\frac{\gamma r}{p'}}}\biggr]\\ \approx\frac{1/s}{1-\xi^{\frac{\gamma r}{p'}}}\biggl[\int_0^\infty\frac{\mathrm{d}[t^s+1]}{[1+t^s]^{\frac{\gamma r}{p's}}}- \xi^{\frac{\gamma r}{p'}-s}\int_0^\infty \frac{\mathrm{d}[\xi^st^s+1]}{[1+\xi^st^s]^{\frac{\gamma r}{p's}}}\biggr]\\=\frac{1-\xi^{r(\lambda-1/q)}}{s(1-\xi^{\frac{\gamma r}{p'}})}\label{l7}
\end{multline} in view of $s>0$ and $\lambda q>1$. If $\lambda q=1$ then $s=\gamma r/p'$ and , analogously to \eqref{l7}, $$I(\xi)\approx \frac{1}{1-\xi^{\gamma r/p'}}\ln[1/\xi].$$ If $\lambda q<1$, then, by $\frac{z}{(1+cz)(1+z)}=\frac{1}{1-c}\Bigl[\frac{1}{1+cz}-\frac{1}{1+z}\Bigr]$ and in view of $r/p+1=r/q$,
\begin{multline*}I(\xi)\approx\int_0^\infty \frac{t^{\frac{\gamma r}{p'}}\cdot t^{\frac{r}{p}-\lambda r}\,\mathrm{d}t}{[1+\xi^{\frac{\gamma r}{p'}}t^{\frac{\gamma r}{p'}}][1+t^{\frac{\gamma r}{p'}}]}=\frac{1/(\frac{r}{q}-\lambda r)}{1-\xi^{\frac{\gamma r}{p'}}}\int_0^\infty\biggl[\frac{\mathrm{d}t^{\frac{r}{q}-\lambda r}}{1+\xi^{\frac{\gamma r}{p'}}t^{\frac{\gamma r}{p'}}}- \frac{\mathrm{d}t^{\frac{r}{q}-\lambda r}}{1+t^{\frac{\gamma r}{p'}}}\biggr]\\ \approx\frac{1/(\frac{r}{q}-\lambda r)}{1-\xi^{\frac{\gamma r}{p'}}}\biggl[\frac{1}{\xi^{\frac{r}{q}-\lambda r}}\int_0^\infty\frac{\mathrm{d}[\xi^{\frac{r}{q}-\lambda r}t^{\frac{r}{q}-\lambda r}+1]}{[1+\xi^{\frac{r}{q}-\lambda r}t^{\frac{r}{q}-\lambda r}]^{\frac{\gamma}{p'(1/q-\lambda)}}}\\- \int_0^\infty \frac{\mathrm{d}[t^{\frac{r}{q}-\lambda r}+1]}{[1+t^{\frac{r}{q}-\lambda r}]^{\frac{\gamma}{p'(1/q-\lambda)}}}\biggr].
\end{multline*}
If we suppose that $\frac{\gamma}{p'(1/q-\lambda)}>1$, that is  \begin{equation} \label{l5}\lambda>\frac{1}{q}-\frac{\gamma}{p'}
,\end{equation} then $I(\xi)$ converges in the case $\lambda q<1$ and
$$I(\xi)\approx \frac{1/(\frac{r}{q}-\lambda r)}{1-\xi^{\frac{\gamma r}{p'}}}\biggl[\frac{1}{\xi^{\frac{r}{q}-\lambda r}}-1\biggr].$$
Now, under assumptions \eqref{l4} and \eqref{l5}, we obtain that  
$$\mathcal{B}(\xi)\approx\begin{cases}
\Bigl(\frac{(1-\xi^{\lambda q-1})(1-\xi^\gamma)^{p-1}}{1+\xi^{\lambda q-1}}\Bigr)^{\frac{1}{p}}\Bigl(\frac{1-\xi^{r(\lambda-1/q)}}{1-\xi^{\frac{\gamma r}{p'}}}\Bigr)^{\frac{1}{r}}
,& \lambda q> 1,\\
\Bigl(\frac{(1-\xi^{1-\lambda q})(1-\xi^\gamma)^{p-1}}{1+\xi^{1-\lambda q}}\Bigr)^{\frac{1}{p}}\xi^{-(\frac{1}{q}-\lambda)} \Bigl(\frac{1-\xi^{r(1/q-\lambda)}}{1-\xi^{\frac{\gamma r}{p'}}}\Bigr)^{\frac{1}{r}}
,& \lambda q< 1,\\
[\ln 1/\xi]^{\frac{1}{q}}(1-\xi^\gamma)^{\frac{1}{p'}} \Bigl(1-\xi^{\frac{\gamma r}{p'}}\Bigr)^{-\frac{1}{r}}, & \lambda q=1.\end{cases}$$

It follows from \eqref{m2} that the inequality \eqref{00} is fulfilled if $\bar{ \mathbb{B}}<\infty$, where \begin{equation*}\bar{ \mathbb{B}}\approx\begin{cases}
\Bigl(\int_0^1
\Bigl[\frac{(1-\xi^{\lambda q-1})(1-\xi^\gamma)^{p-1}}{1+\xi^{\lambda q-1}}\Bigr]^{\frac{q}{p}}\Bigl[\frac{1-\xi^{r(\lambda-1/q)}}{1-\xi^{\frac{\gamma r}{p'}}}\Bigr]^{\frac{q}{r}}\frac {\mathrm{d}\xi}{(1-\xi)^{1+\lambda q}} \Bigr)^{\frac{1}{q}}, & \lambda q>1,\\\Bigl(\int_0^1
\Bigl[\frac{(1-\xi^{1-\lambda q})(1-\xi^\gamma)^{p-1}}{1+\xi^{1-\lambda q}}\Bigr]^{\frac{q}{p}}\xi^{\lambda q-1} \Bigl[\frac{1-\xi^{r(1/q-\lambda)}}{1-\xi^{\frac{\gamma r}{p'}}}\Bigr]^{\frac{q}{r}}\frac {\mathrm{d}\xi}{(1-\xi)^{1+\lambda q}} \Bigr)^{\frac{1}{q}}, & \lambda q<1,\\
\Bigl(\int_0^1\ln 1/\xi[1-\xi^\gamma]^{\frac{q}{p'}}\Bigl[1-\xi^{\frac{\gamma r}{p'}}\Bigr]^{-\frac{q}{r}}\frac {\mathrm{d}\xi}{(1-\xi)^{2}} \Bigr)^{\frac{1}{q}}, & \lambda q=1;\end{cases}\end{equation*}
and if \eqref{00} holds with $C<\infty$ then $\underline{\mathbb{B}}<\infty$, where
\begin{equation*}\underline{\mathbb{B}}\approx\begin{cases}
\sup\limits_{0<\xi<1}\Bigl(\frac{(1-\xi^{\lambda q-1})(1-\xi^\gamma)^{p-1}}{1+\xi^{\lambda q-1}}\Bigr)^{\frac{1}{p}}\Bigl(\frac{1-\xi^{r(\lambda-1/q)}}{1-\xi^{\frac{\gamma r}{p'}}}\Bigr)^{\frac{1}{r}}
\Bigl(\frac{1}{(1-\xi)^{\lambda q}}-1\Bigr)^{\frac{1}{q}}, & \lambda q>1,\\  
\sup\limits_{0<\xi<1}\Bigl(\frac{(1-\xi^{1-\lambda q})(1-\xi^\gamma)^{p-1}}{1+\xi^{1-\lambda q}}\Bigr)^{\frac{1}{p}}\xi^{-(\frac{1}{q}-\lambda)} \Bigl(\frac{1-\xi^{r(1/q-\lambda)}}{1-\xi^{\frac{\gamma r}{p'}}}\Bigr)^{\frac{1}{r}}
\Bigl(\frac{1}{(1-\xi)^{\lambda q}}-1\Bigr)^{\frac{1}{q}}, & \lambda q<1,\\\sup\limits_{0<\xi<1}[\ln 1/\xi]^{\frac{1}{q}}(1-\xi^\gamma)^{\frac{1}{p'}} \Bigl(1-\xi^{\frac{\gamma r}{p'}}\Bigr)^{-\frac{1}{r}} (1-\xi)^{-\lambda}\xi^{\frac{1}{q}} , & \lambda q= 1.\end{cases}\end{equation*} If $\lambda q\not =1$ then the integrals in $\bar{\mathbb{B}}$ converge at 0, and, by \eqref{eqv}, converge at $1$ because of $\lambda<1$. If $\lambda q=1$, then, by substitution $\xi=\mathrm{e}^{-t}$, the integral in $\bar{\mathbb{B}}$ takes the form
$$\int_0^\infty\frac{t(1-\mathrm{e}^{-\gamma t})^{\frac{q}{p'}}\mathrm{e}^{-t}\,\mathrm{d}t}{(1-\mathrm{e}^{-\frac{\gamma r}{p'} t})^{\frac{q}{r}}(1-\mathrm{e}^{-t})^{2}}.$$ In view of \eqref{l6}  the integrand behaves like $t^{q-2}$ near $0$, therefore, the integral converges if $q\ge 1$, which correlates with $q=1/\lambda>1$.

Further, if $\lambda q\not =1$ then the supremums in $\underline{\mathbb{B}}$ are finite near $0$ and $1$ in view of \eqref{eqvi} and \eqref{eqv}, respectively. In the case $\lambda q=1$ the substitution $\xi=\mathrm{e}^{-t}$ and the relation \eqref{l6} yield finiteness of the supremum in $\underline{\mathbb{B}}$.

Combining all the above estimates we can state the following Proposition.
\begin{proposition} Let $0<q<p<\infty$, $0<\lambda<1$ and $\gamma>0.$ Assume that $u=1$ and $v(z)=z^{\frac{\gamma-1}{1-p'}}(1+z^\gamma)^{\frac{2}{p'-1}}$ in the inequality \eqref{00}. Then \eqref{00} holds if and only if \begin{equation*}\label{l8}\Bigl|\lambda-\frac{1}{q}\Bigr|<\frac{\gamma}{p'}. \end{equation*} 
\end{proposition}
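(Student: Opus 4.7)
The plan is to deduce the proposition from the two-sided bounds \eqref{m2}, so that the inequality \eqref{00} (with $u=1$ and the specified $v$) holds with finite $C$ if and only if both $\underline{\mathbb{B}}<\infty$ and $\bar{\mathbb{B}}<\infty$; since the first controls the second from below (up to the weak $(1-\xi)^{-\lambda q}$ factor) and the integrand of $\bar{\mathbb{B}}$ is $[\mathcal{B}(\xi)]^{q}(1-\xi)^{-1-\lambda q}$, the question reduces to: for which $\lambda,\gamma$ is $\mathcal{B}(\xi)$ finite on $(0,1)$ and integrable against $(1-\xi)^{-1-\lambda q}$?

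First I would observe that the weight is symmetric under $\gamma\mapsto-\gamma$ in the sense that $v^{1-p'}$ has the explicit antiderivative $-1/[\gamma(1+z^{\gamma})]$, so that
\[
\int_{\xi t}^{t}v^{1-p'}(z)\,\mathrm{d}z=\frac{t^{|\gamma|}(1-\xi^{|\gamma|})}{|\gamma|\,[1+(\xi t)^{|\gamma|}]\,[1+t^{|\gamma|}]},
\]
which reduces the problem to $\gamma>0$. Plugging this into the functional $\mathcal{B}(\xi)$ of \eqref{l3} I would split the computation into the three cases $\lambda q>1$, $\lambda q<1$, $\lambda q=1$. The heart of the matter is the inner integral
\[
I(\xi)=\int_{0}^{\infty}\frac{t^{s-1}\,\mathrm{d}t}{[1+(\xi t)^{\gamma}]^{r/p'}[1+t^{\gamma}]^{r/p'}},\qquad s=r\bigl(\tfrac{1}{q}-\lambda\bigr)+\tfrac{\gamma r}{p'}.
\]
Finiteness at $0$ demands $s>0$, i.e.\ $\lambda<\tfrac{1}{q}+\tfrac{\gamma}{p'}$, which is \eqref{l4}; finiteness at $\infty$ in the subcase $\lambda q<1$ demands $\gamma/[p'(1/q-\lambda)]>1$, i.e.\ $\lambda>\tfrac{1}{q}-\tfrac{\gamma}{p'}$, which is \eqref{l5}. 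In the subcases $\lambda q\ge1$ the latter is automatic. Thus both bounds together are precisely $|\lambda-1/q|<\gamma/p'$, and they are necessary simply for $\mathcal{B}(\xi)$ to be finite at any $\xi$, hence a fortiori for $\underline{\mathbb{B}}<\infty$, giving the necessity half of the proposition via \eqref{m2}.

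For sufficiency I would, under the two conditions \eqref{l4}, \eqref{l5}, extract the closed form of $I(\xi)$ by partial fractions (the calculations \eqref{l7} and its $\lambda q\lessgtr 1$ analogs already appear in the preceding paragraph), obtaining
\[
\mathcal{B}(\xi)\approx\begin{cases}
\Bigl(\tfrac{(1-\xi^{\lambda q-1})(1-\xi^{\gamma})^{p-1}}{1+\xi^{\lambda q-1}}\Bigr)^{\!1/p}
\Bigl(\tfrac{1-\xi^{r(\lambda-1/q)}}{1-\xi^{\gamma r/p'}}\Bigr)^{\!1/r},&\lambda q>1,\\[4pt]
\Bigl(\tfrac{(1-\xi^{1-\lambda q})(1-\xi^{\gamma})^{p-1}}{1+\xi^{1-\lambda q}}\Bigr)^{\!1/p}\xi^{\lambda-1/q}
\Bigl(\tfrac{1-\xi^{r(1/q-\lambda)}}{1-\xi^{\gamma r/p'}}\Bigr)^{\!1/r},&\lambda q<1,\\[4pt]
[\ln(1/\xi)]^{1/q}(1-\xi^{\gamma})^{1/p'}\bigl(1-\xi^{\gamma r/p'}\bigr)^{-1/r},&\lambda q=1.
\end{cases}
\]
Then I would estimate $\bar{\mathbb{B}}$ near $\xi=0$ trivially, and near $\xi=1$ using the elementary equivalences \eqref{eqv}, \eqref{eqvi}; the surviving power of $(1-\xi)$ turns out to be harmless because $\lambda<1$, and the case $\lambda q=1$ is handled via $\xi=\mathrm{e}^{-t}$ together with \eqref{l6}, reducing the integrand to behavior $t^{q-2}$ at $0$ which is integrable since $q=1/\lambda>1$. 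This yields $\bar{\mathbb{B}}<\infty$, hence $C<\infty$ by \eqref{m2}.

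The main technical obstacle is the delicate three-way case split around $\lambda q=1$ combined with the endpoint asymptotics at $\xi\to 0$ and $\xi\to 1$; the partial-fraction evaluation of $I(\xi)$ and the behavior of the factors $1-\xi^{\alpha}$ with $\alpha$ of either sign must be tracked carefully to ensure the upper and lower bounds in \eqref{m2} really do give matching necessary and sufficient conditions. Once the algebra of $\mathcal{B}(\xi)$ is settled, everything else is a routine integrability check.
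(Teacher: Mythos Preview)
Your proposal is correct and follows essentially the same route as the paper: reduce to the two-sided estimate \eqref{m2}, compute $\int_{\xi t}^{t}v^{1-p'}$ via the explicit antiderivative, analyze the convergence of $I(\xi)$ to extract the two conditions \eqref{l4} and \eqref{l5} (which together give $|\lambda-1/q|<\gamma/p'$) as necessary for $\mathcal{B}(\xi)<\infty$ and hence for $\underline{\mathbb{B}}<\infty$, then under these conditions evaluate $\mathcal{B}(\xi)$ in closed form via partial fractions and verify $\bar{\mathbb{B}}<\infty$ by the endpoint asymptotics \eqref{eqv}, \eqref{eqvi} and the substitution $\xi=\mathrm{e}^{-t}$ when $\lambda q=1$. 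The only cosmetic difference is that the paper also records, for completeness, that $\underline{\mathbb{B}}<\infty$ under the conditions, which is redundant for the if-and-only-if statement but confirms consistency.
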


\end{example}

{\bf Acknowledgments.} The authors thank all the members of the seminar conducted weekly at the Laboratory of approximate methods and functional analysis in the Computing Center of FEB RAS in Khabarovsk for productive discussions and valuable remarks.


%
%




\end{document}